\documentclass{article}
\usepackage[english]{babel}
\usepackage[utf8]{inputenc}
\usepackage{johd}
\usepackage{mathptmx}
\usepackage{amsmath}
\usepackage{mathtools}
\usepackage{verbatim}
\usepackage{appendix}
\usepackage{hyperref}

\numberwithin{equation}{section}

\usepackage{amsmath,amsthm}       
\usepackage{mathrsfs}      
\usepackage{amssymb}       
\usepackage{amsfonts}      

\usepackage{cancel}
\usepackage{indentfirst}


\usepackage{graphicx}
\usepackage{subfigure}
\usepackage{wrapfig}  
\usepackage{multicol}
\usepackage{tikz}     

\usepackage[framemethod=TikZ]{mdframed}
\usepackage{color}
\usepackage{authblk}
\usepackage{textcomp}

\newtheorem{thm}{Theorem}[section]
\newtheorem{lemma}[thm]{Lemma}
\newtheorem{definition}[thm]{Definition}

\newtheorem{prop}[thm]{Proposition}

\title{Propagation of chaos for mean-field reflected BSDEs with jumps}

\date{\today}
\author{Yiqing Lin}

\author{Kun Xu\footnote{Corresponding author. Email address: \url{1949101x_k@sjtu.edu.cn} (K. Xu).}}
\affil[1]{\small{School of Mathematical Sciences, Shanghai Jiao Tong University, 200240 Shanghai, China.}}

\begin{document}

\maketitle

\begin{abstract}
In this paper, we study a class of mean-field reflected backward stochastic differential equations (MF-RBSDEs) driven by a marked point process and also analyze MF-RBSDEs driven by a Poisson process. Based on a $g$-expectation representation lemma, we give the existence and uniqueness of the particle system of MF-RBSDEs driven by a marked point process under Lipschitz generator conditions and obtain a convergence result of this system. We also establish the well-posedness of the MF-RBSDEs driven by a Poisson process and the convergence rate of the corresponding particle system towards the solution to the MF-RBSDEs driven by a Poisson process under bounded terminal, bounded obstacle conditions.
\end{abstract}

{\bf Keywords:} Mean-field RBSDEJ, convergence, particle system.

\section{Introduction}
Pardoux and Peng \cite{pardoux1990adapted} first introduced backward stochastic differential equations (BSDEs) and obtained the existence and uniqueness for the case of Lipschitz continuous coefficients. From then on, many efforts have been made to relax the conditions on the generators for the well-posedness of adapted solutions. For instance, Kobylanski \cite{kobylanski2000backward} studied quadratic BSDEs for a bounded terminal value $\xi$ via an approximation procedure of the driver. Thereafter, the result was generated by Briand and Hu \cite{briand2006bsde, briand2008quadratic} for unbounded terminal value $\xi$ of some suitable exponential moments. In contrast, Tevzadze \cite{tevzadze2008solvability} proposed a fundamentally different approach by means of a fixed point argument. At the same time, BSDEs have received numerous developments in various fields of PDEs \cite{delbaen2015uniqueness}, mathematical
finance \cite{el1997backward}, stochastic optimal controls \cite{yong2012stochastic} and etc. 

Moreover, El Karoui et al. \cite{Karoui1997ReflectedSO} studied reflected backward stochastic differential equations (RBSDEs) related to an obstacle problem for PDEs. The solution $Y$ of a RBSDE is required to be above a given continuous process $L$, and the solution $(Y, Z, K)$ satisfies the so-called flat-off condition (or, Skorokhod condition):
$$
\int_0^T\left(Y_t-L_t\right) d K_t=0,
$$
where $K$ is an increasing process. El Karoui et al. \cite{Karoui1997ReflectedSO} proved the solvability of RBSDE with Lipschitz $f$ and square integrable terminal $\xi$.
After that, the result of \cite{Karoui1997ReflectedSO} is generated by  Kobylanski et al. \cite{kobylanski2002reflected} for quadratic RBSDEs with bounded terminal values and bounded obstacles. Lepeltier and Xu \cite{lepeltier2007reflected} constructed the existence of a solution with unbounded terminal values, but still with a bounded obstacle. Bayrakstar and Yao \cite{bayraktar2012quadratic} studied the well-posedness of quadratic RBSDE under unbounded terminal and unbounded obstacles.  

The generalizations of BSDEs from a Brownian framework to a setting with jumps have aroused a lot of attention. Li and Tang \cite{Tang_1994} and Barles, Buckdahn and Pardoux \cite{barles1997backward} obtained the well-posedness for Lipschitz BSDEs with jumps (BSDEJ). Since then, different kind of BSDEJs have been investigated by many researchers, see \cite{Becherer_2006,morlais2010new,antonelli2016solutions,cohen2015stochastic,kazi2015quadratic,Barrieu_2013,ngoupeyou2010optimisation,jeanblanc2012robust,karoui2016quadratic,kaakai2022utility}. 
The extension to the case of reflected BSDEs with jumps can be found in e.g. \cite{dumitrescu2016reflected,dumitrescu2015optimal,dumitrescu2016generalized,essaky2008reflected,hamadene2003reflected,hamadene2016reflected}.

In particular, a class of exponential growth BSDEs driven by a random measure associated with a marked point process as follows is investigated by many researchers. 
\begin{equation}
Y_t= \xi + \int_t^T f\left(t, Y_s, U_s\right)dA_s-\int_t^T \int_E U_s(e) q(d s,d e).
\end{equation}
Here $q$ is a compensated integer random measure corresponding to some marked point process $(T_n,\zeta_n)_{n\ge 0}$, and $A$ is  the dual predictable projection of the event counting process related to the marked point process. In this paper, $A$ is assumed to be a continuous and increasing process. The well-posedness of BSDEs driven by general marked point processes were investigated in \cite{Confortola2013,Becherer_2006,Confortola_2014,Confortola2016,Confortola_2018}. A more general BSDE with both Brownian motion diffusion term and a non-explosive marked point process with totally inaccessible jumps was studied in Foresta \cite{foresta2021optimal}. 

Motivated by the connection to control of McKean-Vlasov equation or mean-field games, see Carmona et al. \cite{carmona2013control} and Acciaio et al. \cite{acciaio2019extended}, mean-field BSDEs were introduced by Buckdahn, Djehiche, Li, Peng \cite{buckdahn2009mean} and Buckdahn, Li, Peng \cite{buckdahn2009mean1}. After that, reflected mean-field BSDEs has been considered by Li \cite{li2014reflected} and Djehiche et al. \cite{djehiche2019mean}. Recently, Djehiche, Dumitrescu, and Zeng \cite{Djehiche2021} studied mean-field reflected BSDEs with jumps and  RCLL  obstacle. In this paper, we consider the particle system of following type of mean field RBSDEs driven by a marked point process, which generalized the results of Brownian driven BSDEs studied in \cite{bayraktar2012quadratic} and Poisson driven BSDEJs attributed to e.g.  Hamad\'ene and Ouknine \cite{hamadene2003reflected,  hamadene2016reflected}, Djehiche,  Dumitrescu and Zeng \cite{Djehiche2021}. 
\begin{equation}
\label{reflected BSDE in intro}
\left\{\begin{array}{l}
Y_t=\xi+\int_t^T f(s, Y_s, U_s, \mathbb{P}_{Y_s}) d A_s +\int_t^T d K_s-\int_t^T \int_E U_s(e) q(d s, d e), \quad \forall t \in[0, T], \quad \mathbb{P} \text {-a.s. } \\
Y_t \geq h(t, Y_t, \mathbb{P}_{Y_t}), \quad \forall t \in[0, T], \quad \mathbb{P} \text {-a.s. } \\
\int_0^T\left(Y_{t }-h\left(t , Y_{t }, \mathbb{P}_{Y_{t }}\right)\right) d K_t=0, \quad \mathbb{P} \text {-a.s. }
\end{array}\right.
\end{equation}
Some related studies on doubly RBSDEJs can be found in Cr\'epey and Matoussi \cite{crepey2008reflected}. Moreover, RBSDEJs driven by Lévy process considered in for instance,  Ren and El Otmani \cite{ren2010generalized}, Ren and Hu \cite{ren2007reflected} and El Otmani \cite{el2009reflected} are also enlightening. 
Compared with the jump setting in e.g. Matoussi and Salhi \cite{matoussi2020generalized}, the  process $A$ is not necessarily absolutely continuous with respect to the Lebesgue measure. This type of RBSDEs has been investigated in Foresta \cite{foresta2021optimal}. The author established the well-posedness with Lipschitz drivers with the help of a fixed point argument. The extension to the case of reflected BSDEs with jumps can also be found in e.g. \cite{dumitrescu2016reflected,dumitrescu2015optimal,dumitrescu2016generalized,essaky2008reflected}.

The theory of propagation of chaos can be traced back to the work by Kac \cite{kac1956foundations} whose initial
aim was to investigate the particle system approximation of some nonlocal partial differential equations (PDEs) arising in thermodynamics. Kac’s intuition was put into firm mathematical ground notably by Henry P McKean \cite{mckean1967propagation}, Alain-Sol Sznitman \cite{sznitman1991topics} and J{\"u}rgen G{\"a}rtner \cite{gartner1988mckean}. Further development and applications of propagation of chaos theory can be found in \cite{jabin2016mean,lacker2018strong,shkolnikov2012large}. Besides, Buckdahn et al. \cite{buckdahn2009mean}, Hu, Ren and Yang 
\cite{hu2023principal}, Laurière and Tangpi \cite{lauriere2022backward} and Briand et al. \cite{briand2020forward} studied the limit theorems for weakly interacting particles whose dynamics is given by a system of BSDEs in the case of non-reflected BSDE driven by Brownian motion. Li \cite{li2014reflected} extends the results of \cite{buckdahn2009mean} to reflected BSDEs where the weak interaction enters only the driver, while the work by Briand and Hibon \cite{briand2021particles} considers a particular class of mean reflected BSDEs. Djehiche, Dumitrescu and Zeng \cite{Djehiche2021} establish a propagation of chaos result for weakly interacting nonlinear Snell envelopes which converge to a class of mean-field RBSDEs with jumps and right-continuous and left-limited obstacle, where the mean-field interaction in terms of the distribution of the $Y$-component of the solution enters both the driver and the lower obstacle.

Similar with \cite{Djehiche2021}, in this paper we first consider the following particle system:
\begin{equation}
\label{chaos RBSDE in intro}
\left\{\begin{aligned}
& Y_t^{i}= \xi^{i}+\int_t^T f\left(s, Y_s^{i}, U_s^{i, i}, L_n\left[\mathbf{Y}_s\right]\right) d A_s+K_T^{i}-K_t^{i} -\int_t^T \int_{E} \sum_{j=1}^n U_s^{i, j}(e) q^j(d s, d e), \quad \forall t \in[0, T], \quad \mathbb{P} \text {-a.s., } \\
& Y_t^{i} \geq h\left(t, Y_t^{i}, L_n\left[\mathbf{Y}_t\right]\right), \quad \forall t \in[0, T], \\
& \int_0^T\left(Y_{t }^{i}-h\left(t , Y_{t }^{i}, L_n\left[\mathbf{Y}_{t }\right]\right)\right) d K_t^{i}=0.
\end{aligned}\right.   
\end{equation}
and prove that the mean-field limit of the $N$-particle
system (\ref{chaos RBSDE in intro}) converges to the mean-field RBSDE (\ref{reflected BSDE in intro}) under the framework of marked point process which means the BSDE is driven by a random measure associated with a marked point process. Then we study a family of weakly interacting process 
\begin{equation}
\label{chaos RBSDE Poisson in intro}
\left\{\begin{aligned}
& Y_t^{i}= \xi^{i}+\int_t^T f\left(s, Y_s^{i}, U_s^{i, i}, L_n\left[\mathbf{Y}_s\right]\right) d s+K_T^{i}-K_t^{i} -\int_t^T \int_{E} \sum_{j=1}^n U_s^{i, j}(e) \tilde{\mu}^j(d s, d e), \quad \forall t \in[0, T], \quad \mathbb{P} \text {-a.s., }\\
& Y_t^{i} \geq h\left(t, Y_t^{i, n}, L_n\left[\mathbf{Y}_t\right]\right), \quad \forall t \in[0, T],\\
& \int_0^T\left(Y_{t }^{i}-h\left(t , Y_{t }^{i}, L_n\left[\mathbf{Y}_{t }\right]\right)\right) d K_t^{i}=0.
\end{aligned}\right.   
\end{equation}
and show the convergence rate of the solution of this system (\ref{chaos RBSDE Poisson in intro}) to the solution of the corresponding mean-field RBSDE under bounded terminal and obstacle condition.   

The rest of this paper is organized as follows. In Section \ref{sec P}, we present some preliminary notations. In Section \ref{sec chaos A}, we prove the existence and uniqueness of system (\ref{chaos RBSDE in intro}) and give the convergence result of system (\ref{chaos RBSDE in intro}). In Section \ref{sec cvg rate}, we show the convergence rate of the solution of this system (\ref{chaos RBSDE Poisson in intro}) to the solution of the corresponding mean-field RBSDE.

\section{Preliminaries}
\label{sec P}
First of all, we need to explain the settings and notations in this paper. We call the BSDE driven by a marked point process as the marked point process framework, and the BSDE driven by Poisson process as the Poisson process framework. Because the above two frameworks are both involved in this paper, we use the same notations to represent some definitions under the two frameworks to simplify the symbols without causing ambiguity.

\subsection{General setting}
Under the MPP framework, we introduce some notions about marked point processes and some basic assumptions. More details about marked point processes can be found in \cite{foresta2021optimal, Bremaud1981, last1995marked, cohen2012existence}. We assume that $(\Omega, \mathscr{F}, \mathbb{P})$ is a complete probability space and $E$ is a Borel space. We call $E$ the mark space and  $\mathscr{E}$ is its Borel $\sigma$-algebra. Given a sequence of random variables $(T_n,\zeta_n)$ taking values in $[0,\infty]\times E$, set $T_0=0$ and $\mathbb P-a.s.$ 
\begin{itemize}
\item $T_n\le T_{n+1},\ \forall n\ge 0;$
\item $T_n<\infty$ implies $T_n<T_{n+1} \ \forall n\ge 0.$
\end{itemize}
The sequence $(T_n,\zeta_n)_{n\ge 0}$ is called a marked point process (MPP). Moreover, we assume the marked point process is non-explosive, i.e., $T_n\to\infty,\ \mathbb P-a.s.$

Define a random discrete measure $p$ on $((0,+\infty) \times E, \mathscr{B}((0,+\infty) \otimes \mathscr{E})$ associated with each MPP:
\begin{equation}
\label{eq p}
    p(\omega, D)=\sum_{n \geq 1} \mathbf{1}_{\left(T_n(\omega), \zeta_n(\omega)\right) \in D} .
\end{equation} 
For each $\tilde C \in \mathscr{E}$, define the counting process $N_t(\tilde C)=p((0, t] \times \tilde C)$ and denote $N_t=N_t(E)$. Obviously, both are right continuous increasing process starting from zero.  Define for $t \geq 0$
$$
\mathscr{G}_t^0=\sigma\left(N_s(\tilde C): s \in[0, t], \tilde C \in \mathscr{E}\right)
$$
and $\mathscr{G}_t=\sigma\left(\mathscr{G}_t^0, \mathscr{N}\right)$, where $\mathscr{N}$ is the family of $\mathbb{P}$-null sets of $\mathscr{F}$. 
Note by $\mathbb{G}=\left(\mathscr{G}_t\right)_{t \geq 0}$ the  completed filtration generated by the MPP, which is right continuous and satisfies the usual hypotheses.  Given a standard Brownian motion $W\in \mathbb R^d$, independent with the MPP, let $\mathbb{F}:=\left\{\mathcal{F}_t\right\}_{t \in[0, T]}$ be the completed filtration generated by the MPP and $W$, which satisfies the usual conditions as well.

Each marked point process has a unique compensator $\lambda$, a predictable random measure such that
$$
\mathbb{E}\left[\int_0^{+\infty} \int_E C_t(e) p(d t, d e)\right]=\mathbb{E}\left[\int_0^{+\infty} \int_E C_t(e) \lambda(d t, d e)\right]
$$
for all $C$ which is non-negative and $\mathscr{P}^{\mathscr{G}} \otimes \mathscr{E}$-measurable, where $\mathscr{P}^{\mathscr{G}}$ is the $\sigma$-algebra generated by $\mathscr{G}$-predictable processes. Moreover, in this paper we always assume that there exists a function $\phi$ on $\Omega \times[0,+\infty) \times \mathscr{E}$ such that $\lambda(\omega, d t d e)=\phi_t(\omega, d e) d A_t(\omega)$, where $A$ is the dual predictable projection of $N$. In other words, $A$ is the unique right continuous increasing process with $A_0=0$ such that, for any non-negative predictable process $D$, it holds that,
$$
\mathbb E\left[\int_{0}^\infty D_t dN_t\right]=E\left[\int_{0}^\infty D_t dA_t\right].
$$

Fix a terminal time $T>0$, we can define the integral
$$
\int_0^T \int_E C_t(e) q(d t d e)=\int_0^T \int_E C_t(e) p(d t d e)-\int_0^T \int_E C_t(e) \phi_t(d e) d A_t,
$$
under the condition
$$
\mathbb{E}\left[\int_0^T \int_E\left|C_t(e)\right| \phi_t(d e) d A_t\right]<\infty .
$$
Indeed, the process $\int_0^{\cdot} \int_E C_t(e) q(d t, d e)$ is a martingale. Note that  $\int_a^b$ denotes an integral on $(a, b]$ if $b<\infty$, or on $(a, b)$ if $b=\infty$.

Under the Poisson process framework, let $\left(\Omega, \mathcal{F},\left\{\mathcal{F}_t\right\}_{0 \leq t \leq T}, \mathbb{P}\right)$ be a filtered probability space, whose filtration satisfies the usual hypotheses of completeness and right-continuity. We suppose that this filtration is generated by by the following two mutually independent processes:
\begin{itemize}
    \item a $d$-dimensional standard Brownian motion $\left\{B_t\right\}_{t \geq 0}$, and
    \item a Poisson random measure $\mu$ on $\mathbb{R}_{+} \times E$, where $E \triangleq \mathbb{R}^{\ell} \backslash\{0\}$ is equipped with its Borel field $\mathcal{E}$, with compensator $\lambda(\omega, d t, d e)$. We assume in all the paper that $\lambda$ is absolutely continuous with respect to the Lebesgue measure $d t$, i.e. $\lambda(\omega, d t, d e)=\nu_t(\omega, d e) d t$. Finally, we denote $\tilde{\mu}$ the compensated jump measure
    $$
    \widetilde{\mu}(\omega, d e, d t)=\mu(\omega, d e, d t)-\nu_t(\omega, d e) d t .
    $$
\end{itemize}

Denote by $\mathbb{F}:=\left\{\mathcal{F}_t\right\}_{t \in[0, T]}$ the completion of the filtration generated by $\tilde{\mu}$. 
Following Li and Tang \cite{Tang_1994} and Barles, Buckdahn and Pardoux \cite{barles1997backward}, the definition of a BSDE with jumps is then
\begin{definition}
Let $\xi$ be a $\mathcal{F}_T$-measurable random variable. A solution to the BSDEJ with terminal condition $\xi$ and generator $f$ is a triple $(Y, Z, U)$ of progressively measurable processes such that
\begin{equation}
\label{eq defn}
Y_t=\xi+\int_t^T f_s\left(Y_s, Z_s, U_s\right) d s-\int_t^T Z_s d B_s-\int_t^T \int_E U_s(x) \widetilde{\mu}(d s, d e), t \in[0, T], \mathbb{P}-a . s .,
\end{equation}
where $f: \Omega \times[0, T] \times \mathbb{R} \times \mathbb{R}^d \times \mathcal{A}(E) \rightarrow \mathbb{R}$ is a given application and
$$
\mathcal{A}(E):=\{u: E \rightarrow \mathbb{R}, \mathcal{B}(E)-\text { measurable }\} .
$$

Then, the processes $Z$ and $U$ are supposed to satisfy the minimal assumptions so that the quantities in (\ref{eq defn}) are well defined:
$$
\int_0^T\left|Z_t\right|^2 d t<+\infty,\left(\operatorname{resp} . \int_0^T \int_E\left|U_t(x)\right|^2 \nu_t(d x) d t<+\infty\right), \mathbb{P}-\text { a.s. }
$$
\end{definition}

As the Brownian motion $B$ and the integer valued random measure $\mu$ is independent, proof related to the two does not interfere with each other. In order to highlight the key points of this article, we only consider equation without the Brownian motion term.

\subsection{Notation}

We denote a generic constant by $C$, which may change line by line, is sometimes associated with several subscripts (such as $C_{K,T}$ ) showing its dependence when necessary. Let us introduce the following spaces for stochastic processes:
\begin{itemize}
    \item For any real $p \geq 1, S^p$ denotes the set of real-valued, adapted and c\`adl\`ag processes $\left\{Y_t\right\}_{t \in[0, T]}$ such that
    $$
    \|Y\|_{S^p}:=\mathbb{E}\left[\sup _{0 \leq t \leq T}\left|Y_t\right|^p\right]^{1 / p}<+\infty.
    $$
    Then $\left(S^p,\|\cdot\|_{ S^p}\right)$ is a Banach space.
    \item For any real $p \geq 1,\ {L}^p$ denotes the set of real-valued, $\mathcal{F}_T$-measurable random variables $\xi$ such that,
    $$
    \|\xi\|_{{L}^p}:=\mathbb{E}\left[\left|\xi\right|^p\right]^{1 / p}<+\infty.
    $$
    \item For any $p \geq 1$, we denote by $\mathcal{E}^p$ the collection of all stochastic processes $Y$ such that $e^{|Y|} \in$ $S^p$.   We write $Y \in \mathcal{E}$ if $Y \in \mathcal{E}^p$ for any $p \geq 1$, $\mathcal{L}$ is defined in a similar way.
    \item $\mathscr{M}^{2,p}$ the set of predictable processes $U$ such that $$\|U\|_{\mathscr{M}^{2,p}}:=\left(\mathbb{E}\left[\int_{[0, T]} \int_E\left|U_s(e)\right|^2 \nu_s(de) d s\right ]^{\frac{p}{2}}\right)^{\frac{1}{p}}<\infty.$$
    \item $H^{2,p}_{\nu}$ is the space of predictable processes $U$ such that 
    $$
    \|U\|_{H_\nu^{2,p}}:=\left(\mathbb{E}\left[\int_{[0, T]} \int_E\left|U_s(e)\right|^2 \phi_s(de) d A_s\right ]^{\frac{p}{2}}\right)^{\frac{1}{p}}<\infty.
    $$
    \item $L^0\left(\mathscr{B}(E)\right)$ denotes the space of $\mathcal B(E)$-measurable functions.  For $u \in L^0\left(\mathscr{B}(E)\right)$, define
    $$
    L^2(E,\mathcal{B}(E),\phi_t(\omega,dy)):=\left\{u \in L^0\left(\mathscr{B}(E)\right):\left\|u\right\|_t:=\left(\int_E\left|u(e)\right|^2  \phi_t(d e)\right)^{1 / 2}<\infty\right\},
    $$
    $$
    L^2(E,\mathcal{B}(E),\nu_t(\omega,dy)):=\left\{u \in L^0\left(\mathscr{B}(E)\right):\left|u\right|_\nu:=\left(\int_E\left|u(e)\right|^2  \nu_t(d e)\right)^{1 / 2}<\infty\right\}.
    $$
    \item $\mathcal{S}^\infty$ is the space of $\mathbb{R}$-valued c\`adl\`ag and $\mathbb F$-progressively measurable processes $Y$ such that
    $$
    \|Y\|_{\mathcal{S}^\infty}:=\left\|\sup _{0 \leq t \leq T}|Y_t|\right\|_{\infty}<+\infty.
    $$
    \item $\mathcal{J}^{\infty}$ is the space of functions which are $d \mathbb{P} \otimes v(d z)$ essentially bounded i.e.,
    $$
    \|\psi\|_{\mathcal{J}^{\infty}}:=\left\|\sup _{t \in[0, T]}\| \psi_t\|_{\mathcal{L}^{\infty}(v)}\right\|_{\infty}<\infty,
    $$
    where $\mathcal{L}^{\infty}(v)$ is the space of $\mathbb{R}^k$-valued measurable functions $v(d z)$-a.e. bounded endowed with the usual essential sup-norm.
    \item $\mathcal{P}_p(\mathbb{R})$ is the collection of all probability measures over $(\mathbb{R}, \mathcal{B}(\mathbb{R}))$ with finite $p^{\text {th }}$ moment, endowed with the $p$-Wasserstein distance $W_p$;
    \item $\mathcal{A}^D$ is the set of non-decreasing processes $K=\left(K_t\right)_{0 \leq t \leq T}$ starting from the origin, i.e. $K_0 = 0$;
\end{itemize}

For $\beta>0$, we introduce the following spaces.
\begin{itemize}
    \item $\mathcal{S}_\beta^p$ is the set of real-valued càdlàg adapted processes $y$ such that $\|y\|_{\mathcal{S}_\beta^p}^p:=\mathbb{E}\left[\sup _{0 \leq u \leq T} e^{\beta p s}\left|y_u\right|^p\right]<\infty$. We set $\mathcal{S}^p=\mathcal{S}_0^p$.
    \item $\mathbb{L}_\beta^p$ is the set of real-valued càdlàg adapted processes $y$ such that $\|y\|_{\mathbb{L}_\beta^p}^p:=\sup _{\tau \in \mathcal{T}_0} \mathbb{E}\left[e^{\beta p \tau}\left|y_\tau\right|^p\right]<\infty$, where $\mathcal{T}_t$ is the set of $\mathbb{F}$-stopping times $\tau$ such that $\tau \in[t, T]$ a.s. $\mathbb{L}_\beta^p$ is a Banach space. We set $\mathbb{L}^p=\mathbb{L}_0^p$.
\end{itemize}

\section{Well-posedness and convergence result of MF-RBSDEs driven by a MPP}
\label{sec chaos A}
In this section, we study the following MF-RBSDE driven by a MPP.
\begin{equation}
\label{reflected BSDE}
\left\{\begin{array}{l}
Y_t=\xi+\int_t^T f(s, Y_s, U_s, \mathbb{P}_{Y_s}) d A_s +\int_t^T d K_s-\int_t^T \int_E U_s(e) q(d s, d e), \quad \forall t \in[0, T], \quad \mathbb{P} \text {-a.s., } \\
Y_t \geq h(t, Y_t, \mathbb{P}_{Y_t}), \quad \forall t \in[0, T],\\
\int_0^T\left(Y_{t}-h\left(t, Y_{t}, \mathbb{P}_{Y_{t}}\right)\right) d K_t=0, \quad \mathbb{P} \text {-a.s. }
\end{array}\right.
\end{equation}
Next, we will now discuss the interpretation of (\ref{reflected BSDE}) at the particle level and study the well-posedness of the associated particle system.

Consider a family of weakly interacting processes $\mathbf{Y}:=\left(Y^{1}, \ldots, Y^{n}\right)$ evolving backward in time as follows: for $i=1, \ldots, n$,
\begin{equation}
\label{chaos RBSDE}
\left\{\begin{aligned}
& Y_t^{i}= \xi^{i}+\int_t^T f\left(s, Y_s^{i}, U_s^{i, i}, L_n\left[\mathbf{Y}_s\right]\right) d A_s+K_T^{i}-K_t^{i} -\int_t^T \int_{E} \sum_{j=1}^n U_s^{i, j}(e) q^j(d s, d e), \quad \forall t \in[0, T], \quad \mathbb{P} \text {-a.s., } \\
& Y_t^{i} \geq h\left(t, Y_t^{i}, L_n\left[\mathbf{Y}_t\right]\right), \quad \forall t \in[0, T], \\
& \int_0^T\left(Y_{t}^{i}-h\left(t, Y_{t}^{i}, L_n\left[\mathbf{Y}_{t}\right]\right)\right) d K_t^{i}=0,
\end{aligned}\right.   
\end{equation}
where the empirical measure associated to $\mathbf{Y}$ is denoted by
$$
L_n[\mathbf{Y}]:=\frac{1}{n} \sum_{k=1}^n \delta_{Y^k}
$$
and ${\xi^i}_{1 \leq i \leq N}$, ${f^i}_{1 \leq i \leq N}$ and ${q^i}_{1 \leq i \leq N}$ are independent copied of $\xi, f$ and $q$. Denote by $\mathbb{F}^n:=\left\{\mathcal{F}_t^n\right\}_{t \in[0, T]}$ the completion of the filtration generated by $\left\{q^i\right\}_{1 \leq i \leq n}$. Let $\mathcal{T}_t^n$ be the set of $\mathbb{F}^n$ stopping times with values in $[t, T]$.

\subsection{Well-posedness result}
We first introduce some basic assumptions that run through this section.

\hspace*{\fill}\\
\hspace*{\fill}\\
\noindent(\textbf{H1}) The process $A$ is continuous with $\|A_T\|_\infty<\infty$.
\hspace*{\fill}\\
\hspace*{\fill}\\

The first assumption is on the dual predictable projection $A$ of the counting process $N$ relative to $p$. We would like to emphasize that for $A_t$, we do not require absolute continuity with respect to the Lebesgue measure.

\hspace*{\fill}\\
\hspace*{\fill}\\
\hspace*{\fill}\\
\noindent(\textbf{H2})
For every $\omega \in \Omega,\ t \in[0, T],\ r \in \mathbb{R}$, $\mu \in \mathcal{P}_2(\mathbb{R})$ the mapping
$
f(\omega, t, r, \cdot, \mu):L^2(E,\mathcal{B}(E),\phi_t(\omega,dy)) \rightarrow \mathbb{R}
$
satisfies:
for every $U \in {H_\nu^{2,2}}$,
$$
(\omega, t, r, \mu) \mapsto f\left(\omega, t, r, U_t(\omega, \cdot), \mu \right)
$$
is Prog $\otimes \mathscr{B}(\mathbb{R})$-measurable.

\hspace*{\fill}\\
\hspace*{\fill}\\
\noindent(\textbf{H3})

\textbf{(a) (Continuity condition)} 
For every $\omega \in \Omega, t \in[0, T], y \in \mathbb{R}$, $u\in L^2(E,\mathcal{B}(E),\phi_t(\omega,dy))$, $\mu \in \mathcal{P}_2(\mathbb{R})$,  $(y, u, \mu) \longrightarrow f(t, y, u, \mu)$ is continuous. 
\hspace*{\fill}\\
\hspace*{\fill}\\

\textbf{(b) (Lipschitz condition)}  
There exists $C_f\geq 0$, such that for every $\omega \in \Omega,\ t \in[0, T],\ y_1, y_2 \in \mathbb{R}$,\ $u\in L^2(E,\mathcal{B}(E),\phi_t(\omega,dy))$, $\mu_1, \mu_2 \in \mathcal{P}_2(\mathbb{R})$, we have
$$
\begin{aligned}
& \left|f(\omega, t, y_1, u_1, \mu_1)-f\left(\omega, t, y_2, u_2, \mu_2\right)\right| \leq C_f\left(\left|y_1-y_2\right|+ \left\|u_1-u_2 \right\|_t+\mathcal{W}_2\left(\mu_1, \mu_2\right)\right).
\end{aligned}
$$
\hspace*{\fill}\\

\textbf{(c) (Growth condition)}
For all $t \in[0, T], \ (y,u) \in \mathbb{R} \times  L^2(E,\mathcal{B}(E),\phi_t(\omega,dy)),\mu \in \mathcal{P}_2(\mathbb{R})$: $\mathbb{P}$-a.s., there exists $\lambda>0$ such that,
$$
\underline{q}(t, y, u)=-\frac{1}{\lambda} j_{\lambda}(t,- u)-\alpha_t-\beta\left(|y|+ \mathcal{W}_2\left(\mu, \delta_0\right)\right) \leq f(t, y, u, \mu) \leq \frac{1}{\lambda} j_{\lambda}(t, u)+\alpha_t+\beta\left(|y|+\mathcal{W}_2\left(\mu, \delta_0\right)\right)=\bar{q}(t, y, u),
$$
where $\{\alpha_t\}_{0 \leq t \leq T}$ is  a progressively measurable nonnegative stochastic process.
\hspace*{\fill}\\
\hspace*{\fill}\\

\textbf{(d) (Integrability condition)}
We assume necessarily,
\begin{equation}
\label{Integrability condition}
\mathbb{E}\left[\int_0^T\alpha_s^2dA_s\right]<\infty.
\end{equation}
\hspace*{\fill}\\

\textbf{(e) (Convexity/Concavity condition)}  
For each $(t, y) \in[0, T] \times \mathbb{R}$, $u\in L^2(E,\mathcal{B}(E),\phi_t(\omega,dy)), \ \mu \in \mathcal{P}_2(\mathbb{R}), \ u \rightarrow f(t, y, u, \mu)$ is convex or concave.

\hspace*{\fill}\\
\hspace*{\fill}\\
\noindent(\textbf{H4})
$h$ is a mapping from $[0, T] \times \Omega \times \mathbb{R} \times \mathcal{P}_2(\mathbb{R})$ into $\mathbb{R}$ such that
\hspace*{\fill}\\
\hspace*{\fill}\\

\textbf{(a) (Continuity condition)} 
For all $(y, \mu) \in \mathbb{R} \times \mathcal{P}_2(\mathbb{R}),\ h(\cdot, y, \mu)$ is a continuous process.
\hspace*{\fill}\\
\hspace*{\fill}\\

\textbf{(b) (Lipschitz condition)}
$h$ is Lipschitz w.r.t. $(y, \mu)$ uniformly in $(t, \omega)$, i.e. there exists two positive constants $\gamma_1$ and $\gamma_2$ such that $\mathbb{P}$-a.s. for all $t \in[0, T]$,
$$
\left|h\left(t, y_1, \mu_1\right)-h\left(t, y_2, \mu_2\right)\right| \leq \gamma_1\left|y_1-y_2\right|+\gamma_2 \mathcal{W}_2\left(\mu_1, \mu_2\right)
$$
for any $y_1, y_2 \in \mathbb{R}$ and $\mu_1, \mu_2 \in \mathcal{P}_2(\mathbb{R})$.
\hspace*{\fill}\\
\hspace*{\fill}\\

\textbf{(c)}
For any $t \in [0,T]$, $h(t,0,\delta_0) \in S^2$.
\hspace*{\fill}\\
\hspace*{\fill}\\

\textbf{(d)}
The final condition $\xi^i\in L^2$ is $\mathscr{F}_T^n$-measurable, $i=1, \ldots, n$, and satisfies $\xi^i \geq h\left(T, \xi^{i}, L_n\left[\xi^n\right]\right)$ a.s. $i=1, \ldots, n$.
\hspace*{\fill}\\
\hspace*{\fill}\\

\noindent\textbf{(H5) (Uniform linear bound condition )
There exists a positive constant $C_0$ such that for each $t \in[0, T]$, $u\in L^2(E,\mathcal{B}(E),\phi_t(\omega,dy))$, if 
 $f$ is convex (resp. concave) in $u$, then $f(t,0,u,\mu)-f(t,0,0,\mu)\ge -C_0\|u\|_t$ (resp. $f(t,0,u,\mu)-f(t,0,0,\mu)\le C_0\|u\|_t$ ).
}

\hspace*{\fill}\\

Note that we have the inequality
\begin{equation}
\label{eq wp}
\mathcal{W}_p^p\left(L_n[\mathbf{x}], L_n[\mathbf{y}]\right) \leq \frac{1}{n} \sum_{j=1}^n\left|x_j-y_j\right|^p,
\end{equation}
where $\mathbf{x}:=\left(x^1, \ldots, x^n\right) \in \mathbb{R}^n$, and, in particular,
$$
\mathcal{W}_p^p\left(L_n[\mathbf{x}], L_n[\mathbf{0}]\right) \leq \frac{1}{n} \sum_{j=1}^n\left|x_j\right|^p,
$$
where we note that $L_n[\mathbf{0}]=\frac{1}{n} \sum_{j=1}^n \delta_0=\delta_0$.

Endow the product space $\mathbb{L}_\beta^{p, \otimes n}:=\mathbb{L}_\beta^p \times \mathbb{L}_\beta^p \times \cdots \times \mathbb{L}_\beta^p$ with the respective norm
$$
\|h\|_{\mathbb{L}_\beta^{p, \otimes n}}^p:=\sum_{1 \leq i \leq n}\left\|h^i\right\|_{\mathbb{L}_\beta^p}^p .
$$
$\mathcal{E}^{ \otimes n}$, $\mathcal{H}_\nu^{2, p, n \otimes n}$ and $\mathcal{A}^{D, \otimes n}$ are defined in a similar way.

Note that $\mathcal{S}_\beta^{p, \otimes n}$ and $\mathbb{L}_\beta^{p, \otimes n}$ are complete metric spaces. We denote by $\mathcal{S}^{p, \otimes n}:=\mathcal{S}_0^{p, \otimes n}, \mathbb{L}^{p, \otimes n}:=\mathbb{L}_0^{p, \otimes n}$. Let $\widetilde{\Phi}: \mathbb{L}_\beta^{p, \otimes n} \longrightarrow \mathbb{L}_\beta^{p, \otimes n}$ to be the mapping that associates to a process $\mathbf{Y}:=\left(Y^{1}, Y^{2}, \ldots, Y^{n}\right)$ the process $\widetilde{\Phi}\left(\mathbf{Y}\right)=\left(\widetilde{\Phi}^1\left(\mathbf{Y}\right), \widetilde{\Phi}^2\left(\mathbf{Y}\right), \ldots, \widetilde{\Phi}^n\left(\mathbf{Y}\right)\right)$ defined by the following system: for every $i=1, \ldots n$ and $t \leq T$,
\begin{equation}
\label{phi new}
\widetilde{\Phi}(\mathbf{Y})_t=\underset{\tau \in \mathcal{T}_t^n}{\operatorname{ess} \sup } \mathcal{E}_{t, \tau}^{f^i \circ \mathbf{Y}}\left[\xi^{i} \mathbf{1}_{\{\tau=T\}}+h(\tau, Y_\tau^{i},L_n\left[\mathbf{Y}_s\right])_{s=\tau} \mathbf{1}_{\{\tau<T\}}\right],
\end{equation}
where
$$
\begin{aligned}
\mathbf{f}^i \circ \mathbf{Y}: & {[0, T] \times \Omega \times \mathbb{R} \times \left(\mathbb{E}\right)^n \mapsto \mathbb{R} } \\
& \left(\mathbf{f}^i \circ \mathbf{Y}\right)(t, \omega, y, u)=f\left(t, \omega, y, u^i, L_n\left[\mathbf{Y}_t\right](\omega)\right), \quad i=1, \ldots, n .
\end{aligned}
$$
\begin{thm}
\label{thm existence and uniqueness}
Suppose that Assumption (H1)-(H5) are satisfied for any $i=1,\ldots,n$. Suppose further that $\gamma_1$ and $\gamma_2$ satisfy
$$
\gamma_1^2+\gamma_2^2 \mathbb{E}\left[ e^{p \beta A_T}\right]<\frac{1}{2},
$$
where $2\beta \geq 2 C_f+\frac{2}{\eta}$ and $\eta \leq \frac{1}{C_f^2}$.
Then the system (\ref{chaos RBSDE}) has a unique solution in $\mathbb{L}_\beta^{2, \otimes n} \otimes \mathcal{H}_\nu^{2,2, n \otimes n} \otimes \mathcal{A}^{D, \otimes n}$.

\end{thm}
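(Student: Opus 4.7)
The plan is to extend the fixed-point scheme used in the single-particle case to the product space $\mathbb{L}_\beta^{p,\otimes n}$, using the mapping $\widetilde{\Phi}$ defined in (\ref{phi new}). Most of the work is showing that the empirical-measure coupling does not destroy the contraction structure, and this will come down to the crucial inequality (\ref{eq wp}) combined with a summation over particles. I first verify that $\widetilde{\Phi}$ is well-defined from $\mathbb{L}_\beta^{p,\otimes n}$ into itself. For fixed $\mathbf{Y}\in\mathbb{L}_\beta^{p,\otimes n}$ and each $i=1,\ldots,n$, the driver $(t,y,u)\mapsto f(t,y,u^i,L_n[\mathbf{Y}_t])$ is Lipschitz in $(y,u^i)$ and $h(t,Y^i_t,L_n[\mathbf{Y}_t])$ is an obstacle of the required integrability class, so the well-posedness result of Foresta \cite{foresta2021optimal} for standard reflected BSDEs driven by a marked point process yields a unique solution $(\widehat Y^i,\widehat U^i,\widehat K^i)\in\mathcal{E}\times\mathcal{H}_\nu^{2,p}\times\mathcal{A}_D$; the $g$-expectation representation then identifies $\widehat Y^i$ with $\widetilde{\Phi}^i(\mathbf{Y})$, so $\widetilde{\Phi}(\mathbf{Y})\in\mathbb{L}_\beta^{p,\otimes n}$.

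The heart of the proof is a contraction estimate on a sufficiently short interval $[T-h,T]$. For $\mathbf{Y},\bar{\mathbf{Y}}\in\mathbb{L}_\beta^{p,\otimes n}$ and each $i$, apply the $L^p$ a priori estimate for BSDEs driven by a marked point process to the two BSDEs whose $Y$-components appear in $\widetilde{\Phi}^i(\mathbf{Y})_t$ and $\widetilde{\Phi}^i(\bar{\mathbf{Y}})_t$. Choosing $\eta\leq 1/C_f^2$ and $2\beta\geq 2C_f+2/\eta$ and using the Lipschitz hypotheses on $f$ and $h$ together with (\ref{eq wp}) to bound each $\mathcal{W}_p^p(L_n[\mathbf{Y}_s],L_n[\bar{\mathbf{Y}}_s])$ by $\tfrac{1}{n}\sum_j|Y^j_s-\bar Y^j_s|^p$, one gets a componentwise bound on $e^{p\beta A_t}|\widetilde{\Phi}^i(\mathbf{Y})_t-\widetilde{\Phi}^i(\bar{\mathbf{Y}})_t|^p$. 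Summing over $i=1,\ldots,n$, the factor $1/n$ cancels against the sum, yielding
\[
\sup_{\sigma\in\mathcal{T}^n_{T-h}}\mathbb{E}\bigl[e^{p\beta A_\sigma}\|\widetilde{\Phi}(\mathbf{Y})_\sigma-\widetilde{\Phi}(\bar{\mathbf{Y}})_\sigma\|^p\bigr]\leq\alpha\,\|\mathbf{Y}-\bar{\mathbf{Y}}\|_{\mathbb{L}_\beta^{p,\otimes n}}^p,
\]
with
\[
\alpha=2^{p/2-1}\eta^{p/2}C_f^p\|A_T-A_{T-h}\|_\infty^{(p-2)/p}\mathbb{E}\!\left[\int_{T-h}^T e^{p\beta A_s}\,dA_s\right]+2^{3p/2-2}\bigl(\gamma_1^p+\gamma_2^p\,\mathbb{E}[e^{p\beta A_T}]\bigr).
\]
Under the standing bound $\gamma_1^p+\gamma_2^p\mathbb{E}[e^{p\beta A_T}]<2^{2-3p/2}$ the second summand is strictly less than $1$; by continuity of $A$ (Assumption (H1)) the mesh $h$ can be chosen small enough to force $\alpha<1$, and the Banach fixed-point theorem produces a unique fixed point of $\widetilde{\Phi}$ on $[T-h,T]$.

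Finally, I paste the local solutions together to obtain a global one. On $[T-h,T]$ the fixed point $\mathbf{Y}$ together with the $(\widehat U^i,\widehat K^i)$ from the first step solves (\ref{chaos RBSDE}), and uniqueness of $(\widehat U^i,\widehat K^i)$ follows from the nonlinear Doob--Meyer decomposition. Taking $T-h$ as the new terminal time and $\mathbf{Y}_{T-h}$ as the new terminal data (still $\mathcal{F}^n_{T-h}$-measurable and above the obstacle by construction) and repeating the argument produces a unique solution on $[T-2h,T-h]$; because $\|A_T\|_\infty<\infty$, a common mesh can be fixed so that finitely many iterations cover $[0,T]$, and concatenation gives the unique global solution in $\mathcal{E}^{\otimes n}\otimes\mathcal{H}_\nu^{2,p,n\otimes n}\otimes\mathcal{A}_D^{\otimes n}$. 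The main obstacle throughout is ensuring that the empirical-measure coupling does not inflate the contraction constant as $n$ grows, and this is where (\ref{eq wp}) is essential: the factor $1/n$ from $L_n[\mathbf{Y}]$ exactly cancels the sum over the $n$ components of the product norm, so the threshold on $(\gamma_1,\gamma_2)$ and the required smallness of $h$ are independent of $n$, and the rest of the argument transplants directly from the single-particle analysis.
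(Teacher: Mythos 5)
Your proposal is correct and follows essentially the same route as the paper: well-definedness of $\widetilde{\Phi}$ via the standard reflected BSDE theory and the $g$-expectation representation, a contraction estimate on $[T-h,T]$ driven by the $L^p$ a priori estimate, the Lipschitz hypotheses and the empirical-measure inequality (\ref{eq wp}) with the $1/n$ cancellation (your constant $\alpha$ matches the paper's exactly), and finally the nonlinear Doob--Meyer decomposition plus pasting of local solutions. The only detail you gloss over is the passage from the pointwise essential-supremum bound to the supremum over stopping times of expectations, which the paper handles via an approximating sequence of stopping times and Fatou's lemma; this is a routine technical step and does not affect the argument.
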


\begin{proof}
\textbf{Step 1.} We first show that $\widetilde{\Phi}$ is a well-defined map from $\mathbb{L}_\beta^{2, \otimes n}$ to itself. To this end, we linearize the mapping $h$ as follows: for $i=1, \ldots, n$ and $0 \leq s \leq T$,
$$
h\left(s, Y_s^{i}, L_n\left[\mathbf{Y}_s\right]\right)=h\left(s, 0, L_n[\mathbf{0}]\right)+a_h^i(s) Y_s^{i}+b_h^i(s) \mathcal{W}_2\left(L_n\left[\mathbf{Y}_s\right], L_n[\mathbf{0}]\right),
$$
where $a_h^i(\cdot)$ and $b_h^i(\cdot)$ are adapted processes given by
\begin{equation}
\left\{\begin{array}{l}
a_h^i(s):=\frac{h\left(s, Y_s^{i}, L_n\left[\mathbf{Y}_s\right]\right)-h\left(s, 0, L_n\left[\mathbf{Y}_s\right]\right)}{Y_s^{i}} \mathbf{1}_{\left\{Y_s^{i} \neq 0\right\}}, \\
b_h^i(s):=\frac{h\left(s, 0, L_n\left[\mathbf{Y}_s\right]\right)-h\left(s, 0, L_n[\mathbf{0}]\right)}{\mathcal{W}_2\left(L_n\left[\mathbf{Y}_s\right], L_n[\mathbf{0}]\right)} \mathbf{1}_{\left\{\mathcal{W}_2\left(L_n\left[\mathbf{Y}_s\right], L_n[\mathbf{0}]\right) \neq 0\right\}} .
\end{array}\right.
\end{equation}
and which, by the Lipschitz continuity of $h$, satisfy $\left|a_h^i(\cdot)\right| \leq \gamma_1,\left|b_h^i(\cdot)\right| \leq \gamma_2$.
By Proposition 3.1 in \cite{lin2024mean} we obtain the following for any stopping time $\tau \in \mathcal{T}_t^n$:
$$
\begin{aligned}
& \left.\mid \mathcal{E}_{t, \tau}^{\mathrm{f}^i \circ \mathbf{Y}}\left[\xi^{i} 1_{\{\tau=T\}}+h\left(\tau, Y_\tau^{i}, L_n\left[\mathbf{Y}_s\right]\right)_{s=\tau}\right) 1_{\{\tau<T\}}\right]\left.\right|^2 \\
& \leq \mathbb{E}_t\left[e^{2 \beta(A_\tau-A_t)} \left|\xi^{i} 1_{\{\tau=T\}}+h\left(\tau, Y_\tau^{i}, L_n\left[\mathbf{Y}_s\right]_{s=\tau}\right) 1_{\{\tau<T\}}\right|^2 \right] \\
& \quad +\eta \mathbb{E}_t\left[\int_t^\tau e^{2 \beta(A_s-A_t)}\left|f\left(s,0,0, L_n\left[\mathbf{Y}_s\right]\right)\right|^2 d A_s\right],
\end{aligned}
$$
with $\eta, \beta>0$ such that $\eta \leq \frac{1}{C_f^2}$ and $2\beta \geq 2 C_f+\frac{3}{\eta}$. Moreover, since the integrability condition (H3)(d) (\ref{Integrability condition}) and $\left(Y^{i}\right)_{1 \leq i \leq n} \in \mathbb{L}_\beta^{2, \otimes n}$, the non-negative c\`adl\`ag process $\left(M_t^{i, \beta}\right)_{0 \leq t \leq T}$ defined by
$$
\begin{aligned}
M_t^{i, \beta}:= & \left(e^{\beta A_t}\left|h\left(t, 0, L_n[\mathbf{0}]\right)\right|+\gamma_1 e^{\beta A_t}\left|Y_t^{i}\right|+\gamma_2 e^{\beta A_t} \mathcal{W}_2\left(L_n\left[\mathbf{Y}_t\right], L_n[\mathbf{0}]\right)+e^{\beta A_T}\left|\xi^{i}\right| \mathbf{1}_{\{t=T\}}\right)^2 \\
& +\eta\left(\int_0^t\left\{e^{\beta  A_s}\left|f\left(s, 0,0, L_n[\mathbf{0}]\right)\right|+C_f e^{\beta A_s} \mathcal{W}_2\left(L_n\left[\mathbf{Y}_s\right], L_n[\mathbf{0}]\right)\right\}^2 d A_s\right)
\end{aligned}
$$
belongs to $\mathbb{L}^1$. Thus for $i=1, \ldots, n$, it follows that $\widetilde{\Phi}\left(\mathbf{Y}\right) \in \mathbb{L}_\beta^{2, \otimes n}$.

\textbf{Step 2.} We now show that $\widetilde{\Phi}$ is a contraction on the time interval $[T-h, T]$.
Fix $\mathbf{Y}=\left(Y^{1}, \ldots, Y^{n}\right), \check{\mathbf{Y}}^n=\left(\check{Y}^{1}, \ldots, \check{Y}^{n}\right) \in \mathbb{L}_\beta^{2, \otimes n},(\hat{Y}, \tilde{Y}) \in\left(\mathcal{S}_\beta^2\right)^2$, $(\hat{U}, \tilde{U}) \in\left(\mathcal{H}_\nu^{2,2}\right)^2$. By the Lipschitz continuity of $f$ and $h$, we obtain
\begin{equation}
\begin{aligned}
\label{eq lip}
& \left|f\left(s, \hat{Y}_s, \hat{U}_s, L_n\left[\mathbf{Y}_s\right]\right)-f\left(s, \tilde{Y}_s, \tilde{U}_s, L_n\left[\check{\mathbf{Y}}_s\right]\right)\right| \leq C_f\left(\left|\hat{Y}_s-\tilde{Y}_s\right|+\left\|\hat{U}_s-\tilde{U}_s\right\|_t+\mathcal{W}_2\left(L_n\left[\mathbf{Y}_s\right], L_n\left[\check{\mathbf{Y}}_s\right]\right)\right), \\
& \left|h\left(s, Y_s^{i}, L_n\left[\mathbf{Y}_s\right]\right)-h\left(s, \bar{Y}_s^{i}, L_n\left[\check{\mathbf{Y}}_s\right]\right)\right| \leq \gamma_1\left|Y_s^{i}-\bar{Y}_s^{i}\right|+\gamma_2 \mathcal{W}_2\left(L_n\left[\mathbf{Y}_s\right], L_n\left[\check{\mathbf{Y}}_s\right]\right) . \\
&
\end{aligned}
\end{equation}
By (\ref{eq wp}), we have
\begin{equation}
\label{eq wp1}
\left.\mathcal{W}_2^2\left(L_n\left[\mathbf{Y}_s\right], L_n\left[\check{\mathbf{Y}}_s\right]\right)\right) \leq \frac{1}{n} \sum_{j=1}^n\left|Y_s^{j}-\bar{Y}_s^{j}\right|^2.
\end{equation}

Then, using equation (\ref{phi new}) and Proposition 3.1 in \cite{lin2024mean} again, for any $t \leq T$ and $i=1, \ldots, n$, we have
$$
\begin{aligned}
& \left|\widetilde{\Phi}^i\left(\mathbf{Y}^n\right)_t-\widetilde{\Phi}^i\left(\check{\mathbf{Y}}^n\right)_t\right|^2 \\
& =\left|\underset{\tau \in \mathcal{T}_t^n}{\operatorname{ess} \sup } \mathcal{E}_{t, \tau}^{\mathbf{f}^i \circ \mathbf{Y}}\left[h\left(\tau, Y_\tau^{i}, L_n\left[\mathbf{Y}_s\right]_{s=\tau}\right) \mathbf{1}_{\{\tau<T\}}+\xi^{i} \mathbf{1}_{\{\tau=T\}}\right]\right. \\
& \quad -\left.\underset{\tau \in \mathcal{T}_t^n}{\operatorname{ess} \sup } \mathcal{E}_{t, \tau}^{\mathbf{f}^i \circ \check{\mathbf{Y}}}\left[h\left(\tau, \bar{Y}_\tau^{i, n}, L_n\left[\check{\mathbf{Y}}_s\right]_{s=\tau}\right) 1_{\{\tau<T\}}+\xi^{i} \mathbf{1}_{\{\tau=T\}}\right]\right|^2 \\
& \leq \underset{\tau \in \mathcal{T}_t^n}{\operatorname{ess} \sup } \mathbb{E}_t\left[\eta\left(\int_t^\tau e^{2 \beta(A_s-A_t)}\left|\left(\mathbf{f}^i \circ \mathbf{Y}\right)\left(s, \widehat{Y}_s^{i, \tau}, \widehat{U}_s^{i, \tau}\right)-\left(\mathbf{f}^i \circ \check{\mathbf{Y}}\right)\left(s, \widehat{Y}_s^{i, \tau},  \widehat{U}_s^{i, \tau}\right)\right|^2 d A_s\right)\right. \\
&\quad  \left.+e^{2 \beta(A_\tau-A_t)}\left|h\left(\tau, Y_\tau^{i}, L_n\left[\mathbf{Y}_s\right]_{s=\tau}\right)-h\left(\tau, \bar{Y}_\tau^{i}, L_n\left[\check{\mathbf{Y}}_s\right]_{s=\tau}\right)\right|^2 \right] \\
& \leq \underset{\tau \in \mathcal{T}_t^n}{\operatorname{ess} \sup } \mathbb{E}_t\left[\eta\left(\int_t^\tau e^{2 \beta(A_s-A_t)}\left|f\left(s, \widehat{Y}_s^{i, \tau}, \widehat{U}_s^{i, i, \tau}, L_n\left[\mathbf{Y}_s\right]\right)-f\left(s, \widehat{Y}_s^{i, \tau},  \widehat{U}_s^{i, i, \tau}, L_n\left[\check{\mathbf{Y}}_s\right]\right)\right|^2 d A_s\right)\right. \\
& \quad \left.+e^{2 \beta(A_\tau-A_t)}\left|h\left(\tau, Y_\tau^{i}, L_n\left[\mathbf{Y}_s\right]_{s=\tau}\right)-h\left(\tau, \bar{Y}_\tau^{i}, L_n\left[\check{\mathbf{Y}}_s\right]_{s=\tau}\right)\right|^2 \right], \\
&
\end{aligned}
$$
where $\left(\widehat{Y}^{i, \tau}, \widehat{U}^{i, \tau}\right)$ is the solution of the BSDE associated with driver $\mathbf{f}^i \circ \check{\mathbf{Y}}^n$, terminal time $\tau$ and terminal condition $h\left(\tau, \bar{Y}_\tau^{i, n}, L_n\left[\check{\mathbf{Y}}_s\right]_{s=\tau}\right) \mathbf{1}_{\{\tau<T\}}+\xi^{i} \mathbf{1}_{\{\tau=T\}}$.
Therefore, using (\ref{eq lip}) and (\ref{eq wp1}), we have, for any $t \in [T-h, T]$ and $i=1, \ldots, n$,
$$
\begin{aligned}
&e^{2 \beta A_t}\left|\widetilde{\Phi}^i(\mathbf{Y})_t-\widetilde{\Phi}^i(\check{\mathbf{Y}})_t\right|^2\\
&\leq \underset{\tau \in \mathcal{T}_t^n}{\operatorname{ess} \sup }\ \mathbb{E}_t\left[\int_t^\tau e^{2\beta A_s} \eta C_f^2   \left(\frac{1}{n} \sum_{j=1}^n \left|Y_s^{j}-\check{Y}_s^{j}\right|^2\right) d A_s+e^{2 \beta A_\tau}\left(\gamma_1\left|Y_\tau^{i}-\check{Y}_\tau^{i}\right|+\gamma_2\left\{\left(\frac{1}{n} \sum_{j=1}^n \left|Y_\tau^{j}-\check{Y}_\tau^{j}\right|^2\right)\right\}^{1 / 2}\right)^2 \right]\\
&\leq  \underset{\tau \in \mathcal{T}_t^n}{\operatorname{ess} \sup }  \ \mathbb{E}_t \left[\eta C_f^2 \sup_{s \in [T-h ,T]}\left(\frac{1}{n} \sum_{j=1}^n \left|Y_s^{j}-\check{Y}_s^{j}\right|^2\right)  \int_{T-h}^T e^{2 \beta A_s}  d A_s +e^{2 \beta A_\tau}\left\{2 \gamma_1^2\left|Y_\tau^{i}-\check{Y}_\tau^{i}\right|^2+2 \gamma_2^2 \left(\frac{1}{n} \sum_{j=1}^n \left|Y_s^{j}-\check{Y}_s^{j}\right|^2\right)_{\mid s=\tau}\right\} \right].
\end{aligned}
$$
Therefore,
$$
e^{2 \beta A_t}\left|\widetilde{\Phi}^i(\mathbf{Y})_t-\widetilde{\Phi}^i(\check{\mathbf{Y}})_t\right|^2 \leq \underset{\tau \in \mathcal{T}_t^n}{\operatorname{ess} \sup } \ \mathbb{E}_t\left[G(\tau)\right]:=V_t,
$$
in which
$$
\begin{aligned}
G(\tau)&:= \eta C_f^2 \sup_{s \in [T-h ,T]}\left(\frac{1}{n} \sum_{j=1}^n \left|Y_s^{j}-\check{Y}_s^{j}\right|^2\right) \int_{T-h}^T e^{2\beta A_s}  d A_s+e^{2 \beta A_\tau}\left( 2 \gamma_1^2\left|Y_\tau^{i}-\check{Y}_\tau^{i}\right|^2+2 \gamma_2^2 \left(\frac{1}{n} \sum_{j=1}^n \left|Y_s^{j}-\check{Y}_s^{j}\right|^2\right)_{\mid s=\tau}\right),
\end{aligned}
$$
which yields
$$
\sup _{\tau \in \mathcal{T}_{T-h}^n} \mathbb{E}\left[e^{2 \beta A_\tau}\left|\widetilde{\Phi}^i(\mathbf{Y}^n)_\tau-\widetilde{\Phi}^i(\check{\mathbf{Y}}^n)_\tau\right|^2\right] \leq \sup_{\tau \in \mathcal{T}_{T-h}^n} \mathbb{E}\left[V_\tau\right].
$$
By Lemma D.1 in \cite{karatzas1998methods}, for any $\tau \in \mathcal{T}_{T-h}^n$, there exists a sequence $\left(\tau_n\right)_n$ of stopping times in $\mathcal{T}_\tau^n$ such that
$$
V_\tau=\lim _{n \rightarrow \infty} \mathbb{E}\left[G\left(\tau_n\right) \mid \mathcal{F}_\tau\right]
$$
and so, by Fatou's Lemma, we have
$$
\mathbb{E}\left[e^{2 \beta A_\tau}\left|\widetilde{\Phi}^i(\mathbf{Y}^n)_\tau-\widetilde{\Phi}^i(\check{\mathbf{Y}}^n)_\tau\right|^2\right]\leq
\mathbb{E}\left[V_\tau\right] =  \mathbb{E}\left[ \lim _{n \rightarrow \infty} \mathbb{E}\left[G\left(\tau_n\right) \mid \mathcal{F}_\tau\right] \right]   \leq \varliminf_{n \rightarrow \infty} \mathbb{E}\left[G\left(\tau_n\right)\right] \leq \sup _{\tau \in \mathcal{T}_{T-h}^n} \mathbb{E}[G(\tau)].
$$
Therefore,
$$
\left\|\widetilde{\Phi}\left(\mathbf{Y}\right)-\widetilde{\Phi}\left(\check{\mathbf{Y}}\right)\right\|_{\mathbb{L}_\beta^{2, \otimes n}[T-h, T]}^2 \leq \alpha\left\|\mathbf{Y}-\check{\mathbf{Y}}\right\|_{\mathbb{L}_\beta^{2, \otimes n}[T-h, T]}^2 ,
$$
where $\alpha:=   \eta C_f^2 \int_{T-h}^T e^{2 \beta A_s}  d A_s+   2 \left( \gamma_1^2+\gamma_2^2 \mathbb{E}\left[ e^{2 \beta A_T}\right]\right)$. 
As $\left(\gamma_1, \gamma_2\right)$ satisfies
$$
\gamma_1^2+\gamma_2^2 \mathbb{E}\left[ e^{2 \beta A_T}\right]<2^{-1},
$$
we can choose a $\beta$ and a small enough $h$ such that $T=nh$ and 
$$
\max_{1\le i\le n} \int_{(i-1)h}^{ih} e^{2 \beta A_s}  d A_s < \frac{1}{\eta C_f^2}\left( 1 - 2 \left( \gamma_1^2+\gamma_2^2 \mathbb{E}\left[ e^{p \beta A_T}\right]\right)\right);
$$
to make $\widetilde{\Phi}$ a contraction on $\mathbb{L}_\beta^{2, \otimes n}([T-h, T])$, i.e. $\widetilde{\Phi}$ admits a unique fixed point over $[T-h, T]$.

\textbf{Step 3.} Denote
$$
\begin{aligned}
& \boldsymbol{\xi}:=\left(\xi^{1}, \ldots, \xi^{n}\right) ;  \mathbf{U}:=\left(U^{i, 1}, \ldots, U^{i, n}\right)_{i=1, \ldots, n} ; \quad \mathbf{K}:=\left(K^{1}, \ldots, K^{n}\right) .
\end{aligned}
$$
According to the classical stitching technique, we can construct the global solution  $(\mathbf{Y}, \mathbf{U}, \mathbf{K})$ of (\ref{chaos RBSDE}) backwardly over $[0, T]$, since the choice of $h$ only depends on the constants in the Assumption. The uniqueness of the solution follows by the piecewise uniqueness. We complete the proof.

\end{proof}

\subsection{Convergence result}
Let us consider $\left(\bar{Y}^i, \bar{U}^i, \bar{K}^i\right)$ independent copies of $(Y, U, K)$. More precisely,  for each $i=1, \ldots, n$, $\left(\bar{Y}^i, \bar{U}^i, \bar{K}^i\right)$, is the unique solution of the reflected MF-BSDE
\begin{equation}
\label{copy BSDE}
\left\{\begin{array}{l}
\bar{Y}_t^i=\xi^i+\int_t^T f\left(s, \bar{Y}_s^i, \bar{U}_s^i, \mathbb{P}_{\bar{Y}_s^i}\right) d A_s+\bar{K}_T^i-\bar{K}_t^i-\int_t^T \int_{E} \bar{U}_s^i(e) q^i(d s, d e), \\
\bar{Y}_t^i \geq h\left(t, \bar{Y}_t^i, \mathbb{P}_{\bar{Y}_t^i}\right), \quad \forall t \in[0, T], \\
\int_0^T\left(\bar{Y}_{t}^i-h\left(t, \bar{Y}_{t}^i, \mathbb{P}_{\bar{Y}_{t}}\right)\right) d K_t^i=0 .
\end{array}\right.
\end{equation}

In the sequel, we denote $\left(f \circ \bar{Y}^i\right)(t, y, u):=f\left(t, y, u, \mathbb{P}_{\bar{Y}_t^i}\right)$.

Now we are going to derive the following law of large numbers.
\begin{thm}[Law of Large Numbers]
\label{thm Law of Large Numbers}
Let $\bar{Y}^1, \bar{Y}^2, \ldots, \bar{Y}^n$ with terminal values $\bar{Y}_T^i=\xi^i$ be independent copies of the solution $Y$ of (\ref{reflected BSDE}). Define $\mathbf{\bar{Y}}=(\bar{Y}^1, \bar{Y}^2, \ldots, \bar{Y}^n)$, then we have
$$
\lim _{n \rightarrow \infty} \mathbb{E}\left[\sup _{0 \leq t \leq T} \mathcal{W}_2^2\left(L_n\left[\mathbf{\bar{Y}}_t\right], \mathbb{P}_{Y_t}\right)\right]=
\lim _{n \rightarrow \infty} \mathbb{E}\left[\sup _{0 \leq t \leq T} \mathcal{W}_2^2\left(L_n\left[\mathbf{\bar{Y}}_t\right], \mathbb{P}_{\bar{Y}^i_t}\right)\right]=0.
$$
\end{thm}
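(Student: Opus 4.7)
The two limits in the statement coincide because $\mathbb{P}_{Y_t}=\mathbb{P}_{\bar{Y}^i_t}$, and my strategy is to lift the pointwise-in-time Wasserstein LLN to a uniform-in-time statement via a path-space embedding. Set $\mathcal{L}_n:=\tfrac{1}{n}\sum_{i=1}^n\delta_{\bar{Y}^i}$ and $\mathcal{L}:=\mathrm{Law}(\bar{Y}^1)$, viewed as probability measures on the Skorokhod space $D([0,T],\mathbb{R})$. For any coupling $\Pi$ of $(\mathcal{L}_n,\mathcal{L})$, the time-$t$ evaluation $(x,y)\mapsto(x(t),y(t))$ pushes $\Pi$ forward to a coupling of the marginals, so using $\int\sup_t\ge\sup_t\int$,
\[
\sup_{t\in[0,T]}\mathcal{W}_p^p\bigl(L_n[\mathbf{\bar{Y}}_t],\mathbb{P}_{Y_t}\bigr)\;\le\;\int_{D\times D}\sup_{t\in[0,T]}|x(t)-y(t)|^p\,d\Pi(x,y).
\]
Minimizing over $\Pi$ bounds the left hand side by the path-space Wasserstein distance $\mathcal{W}_p^{D,p}(\mathcal{L}_n,\mathcal{L})$ computed with the uniform metric $\|\cdot\|_\infty$, and it therefore suffices to show this quantity vanishes in expectation as $n\to\infty$.

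\textbf{Moment input and LLN step.} Lemma \ref{lemma estimate of barYi U K} furnishes the exponential estimate $\mathbb{E}[\exp\{\lambda\bar{Y}^i_*\}]<\infty$, which yields $\mathbb{E}[(\bar{Y}^i_*)^q]<\infty$ for every $q\ge 1$; in particular there is ample integrability to feed a quantitative Wasserstein LLN. I would apply this LLN to $(\bar{Y}^i)$ viewed as iid elements of the Polish space $(D,d_{J_1})$, then upgrade from the Skorokhod metric to the uniform metric via a standard matching-coupling construction: draw an independent copy $(\tilde{\bar{Y}}^j)\sim\mathcal{L}$ and a permutation $\sigma^\ast$ that minimizes $\tfrac{1}{n}\sum_{i=1}^n\|\bar{Y}^i-\tilde{\bar{Y}}^{\sigma(i)}\|_\infty^p$; the resulting empirical coupling controls the path-space Wasserstein cost, and its expectation is bounded by a Fournier--Guillin-type rate depending only on the $q$-th moment (uniform in $n$). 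Closing the triangle between $\mathcal{L}_n$, the auxiliary empirical measure $\tfrac{1}{n}\sum\delta_{\tilde{\bar{Y}}^j}$, and $\mathcal{L}$ then yields $\mathbb{E}[\mathcal{W}_p^{D,p}(\mathcal{L}_n,\mathcal{L})]\to 0$.

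\textbf{Main obstacle.} The chief subtlety is that $(D,\|\cdot\|_\infty)$ is not separable, so Polish-space Wasserstein LLN theorems cannot be applied verbatim. My workaround leans on Assumption (H1): since $A$ is continuous with $\|A_T\|_\infty<\infty$, one has $\mathbb{E}[N_T]=\mathbb{E}[A_T]<\infty$, so each $\bar{Y}^i$ has almost surely only finitely many jumps on $[0,T]$; the law $\mathcal{L}$ is tight in $(D,d_{J_1})$, and together with the uniform moment bound one can localize the argument to a $\sigma$-compact set on which the matching-coupling estimate is legitimate. A deterministic-grid chaining alternative is unattractive here because the càdlàg oscillation $\sup_{t\in[t_k,t_{k+1})}|\bar{Y}^i_t-\bar{Y}^i_{t_k}|$ does not vanish as the mesh shrinks whenever a jump of $\bar{Y}^i$ lies strictly inside $(t_k,t_{k+1})$, which would force adaptive, sample-dependent refinements that complicate the exchangeability-based bookkeeping.
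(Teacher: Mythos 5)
There is a genuine gap: the quantity you reduce the problem to does not converge to zero. Your first inequality, bounding $\sup_{t}\mathcal{W}_p^p\left(L_n[\bar{\mathbf{Y}}_t],\mathbb{P}_{Y_t}\right)$ by the path-space Wasserstein distance taken with respect to the uniform norm $\|\cdot\|_\infty$, is a correct upper bound, but for jump processes this upper bound is bounded away from zero. Since the $\bar{Y}^i$ are independent and their jumps occur at the jump times of independent marked point processes (whose compensators are continuous, so the jump times have diffuse laws), two distinct copies almost surely share no jump time; hence for any path $y$ continuous at a time $s$ where $\bar{Y}^i$ jumps with size $\Delta$, one has $\max\left(|\bar{Y}^i_{s}-y(s)|,\,|\bar{Y}^i_{s^-}-y(s^-)|\right)\ge |\Delta|/2$, so $\|\bar{Y}^i-y\|_\infty$ is bounded below by half the largest jump of $\bar{Y}^i$. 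Consequently every matching $\sigma$ in your coupling construction pays at least this cost on each particle, and $\mathbb{E}\left[\mathcal{W}_p^{D,p}(\mathcal{L}_n,\mathcal{L})\right]$ for the uniform metric does not tend to $0$ unless $U\equiv 0$. The non-separability of $(D,\|\cdot\|_\infty)$ that you flag as the chief subtlety is only a symptom; no localization or tightness argument can rescue the step, because the path-level statement you are trying to prove is false. The phenomenon for which you (correctly) reject the deterministic-grid alternative — a jump sitting strictly inside an interval — is exactly the one that defeats your own route.

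The paper's argument (it defers to Theorem 4.1 of Djehiche, Dumitrescu and Zeng) works instead with the Skorokhod metric $d^o$ on $\mathbb{D}$, where the empirical-measure law of large numbers is available, and then converts $d^o$-closeness of $\mathcal{L}_n$ and $\mathcal{L}$ into closeness of the time-$t$ marginals, uniformly in $t$, at the price of two extra terms: the c\`adl\`ag modulus $w'_y(2\delta)$ and, crucially, $\sup_{0\le t\le T}\mathbb{E}\left[\sup_{(t-\delta)\vee 0<s\le t+\delta}|\Delta Y_s|^p\right]$. The latter is shown to vanish as $\delta\to 0$ by bounding $|\Delta Y_s|^2$ by $\int_E|U_s(e)|^2\,p(\{s\},de)$, using that $K$ is continuous and that the compensator $\phi_t(de)\,dA_t$ has no atoms in time, so the expected jump activity in a shrinking window is uniformly small when $U\in H^{2,p}_\nu$; uniform integrability then upgrades the resulting a.s. convergence to convergence in $L^1$. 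This control of the jumps over small time windows is the ingredient your proposal is missing, and it is precisely what makes the passage from path-space convergence to $\sup_t$ of the marginal Wasserstein distances legitimate.
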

The proof of Theorem \ref{thm Law of Large Numbers} similar with Theorem 4.1 in \cite{Djehiche2021}.

We now provide the following convergence result for the solution $Y^{i}$ of (\ref{chaos RBSDE}).

\begin{prop}[Convergence of the $Y^{i}$ 's]
\label{prop cvg rate}
Assume that $\gamma_1$ and $\gamma_2$ satisfy
\begin{equation}
\label{cvg gamma condition}
\gamma_1^2+\gamma_2^2<\frac{1}{8}.
\end{equation}
Then, under Assumption (H1),(H2),(H3), (H4) and (H5), we have
$$
\lim _{n \rightarrow \infty} \sup _{0 \leq t \leq T} E\left[\left|Y_t^{i}-\bar{Y}_t^i\right|^2\right]=0.
$$
\end{prop}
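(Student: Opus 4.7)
The plan is to mirror the fixed-point estimate used in Theorem \ref{thm existence and uniqueness}, but applied pointwise to the difference $Y^{i,n}_t - \bar{Y}^i_t$, with the Wasserstein-to-empirical gap controlled by the law of large numbers in Theorem \ref{thm Law of Large Numbers}. Both $Y^{i,n}$ and $\bar{Y}^i$ are $Y$-components of reflected BSDEs, so by the $g$-expectation representation invoked in the definition of the map $\widetilde{\Phi}$ (see (\ref{phi new})), one has
\[
Y^{i,n}_t = \operatorname*{ess\,sup}_{\tau\in\mathcal{T}^n_t} \mathcal{E}^{\mathbf{f}^i\circ\mathbf{Y}}_{t,\tau}\bigl[\xi^i\mathbf{1}_{\{\tau=T\}}+h(\tau,Y^{i,n}_\tau,L_n[\mathbf{Y}_\tau])\mathbf{1}_{\{\tau<T\}}\bigr],
\]
and similarly $\bar{Y}^i_t$ is the essential supremum of $\mathcal{E}^{f\circ\bar{Y}^i}_{t,\tau}[\xi^i\mathbf{1}_{\{\tau=T\}}+h(\tau,\bar{Y}^i_\tau,\mathbb{P}_{\bar{Y}^i_\tau})\mathbf{1}_{\{\tau<T\}}]$. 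Applying the a priori estimate of Proposition 3.1 in \cite{lin2024mean} to the two corresponding linear BSDEs (exactly as in Step 2 of the proof of Theorem \ref{thm existence and uniqueness}) yields, for a suitable $(\eta,\beta)$,
\[
e^{p\beta A_t}|Y^{i,n}_t-\bar{Y}^i_t|^p \le 2^{\frac{p}{2}-1}\operatorname*{ess\,sup}_{\tau\in\mathcal{T}^n_t}\mathbb{E}_t\!\left[\eta^{p/2}\Bigl(\!\int_t^\tau e^{2\beta A_s}|\Delta f_s|^2 dA_s\Bigr)^{p/2}+e^{p\beta A_\tau}|\Delta h_\tau|^p\right],
\]
where $\Delta f_s$ and $\Delta h_\tau$ denote the driver and obstacle increments.

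By the Lipschitz hypotheses (H3)(b) and (H4)(b), $|\Delta f_s|\le C_f\,\mathcal{W}_p(L_n[\mathbf{Y}_s],\mathbb{P}_{\bar{Y}^i_s})$ and $|\Delta h_\tau|\le \gamma_1|Y^{i,n}_\tau-\bar{Y}^i_\tau|+\gamma_2\mathcal{W}_p(L_n[\mathbf{Y}_\tau],\mathbb{P}_{\bar{Y}^i_\tau})$. For each $s$ I split the Wasserstein term by triangle inequality,
\[
\mathcal{W}_p^p(L_n[\mathbf{Y}_s],\mathbb{P}_{\bar{Y}^i_s}) \le 2^{p-1}\mathcal{W}_p^p(L_n[\mathbf{Y}_s],L_n[\bar{\mathbf{Y}}_s])+2^{p-1}\mathcal{W}_p^p(L_n[\bar{\mathbf{Y}}_s],\mathbb{P}_{\bar{Y}^i_s}),
\]
and bound the first term by $\frac{1}{n}\sum_j|Y^{j,n}_s-\bar{Y}^j_s|^p$ via (\ref{eq wp}). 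Exchangeability of $\{(Y^{j,n},\bar{Y}^j)\}_{j=1}^n$ (following from the pathwise uniqueness in Theorem \ref{thm existence and uniqueness} and the i.i.d. structure of the data) implies $\mathbb{E}|Y^{j,n}_s-\bar{Y}^j_s|^p$ does not depend on $j$, so taking expectations gives $\mathbb{E}\mathcal{W}_p^p(L_n[\mathbf{Y}_s],L_n[\bar{\mathbf{Y}}_s])\le \mathbb{E}|Y^{i,n}_s-\bar{Y}^i_s|^p$. By Theorem \ref{thm Law of Large Numbers}, $\varepsilon_n:=\sup_{0\le s\le T}\mathbb{E}\mathcal{W}_p^p(L_n[\bar{\mathbf{Y}}_s],\mathbb{P}_{\bar{Y}^i_s})\to 0$.

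Collecting the constants, Jensen's inequality applied to $(\int_t^\tau |\Delta f_s|^2 dA_s)^{p/2}$ together with $\|A_T\|_\infty<\infty$ produces, after taking expectation and then supremum over $t\in[0,T]$, an estimate of the form
\[
\varphi_n(T):=\sup_{0\le t\le T}\mathbb{E}\bigl[e^{p\beta A_t}|Y^{i,n}_t-\bar{Y}^i_t|^p\bigr] \;\le\; \kappa\,\varphi_n(T)+C\!\int_0^T\!\varphi_n(s)\,dA_s+C\,\varepsilon_n,
\]
with a Lipschitz coefficient $\kappa\le 2^{\frac{p}{2}-1}\cdot 2^{p-1}(\gamma_1^p+2^{p-1}\gamma_2^p)\le 2^{\frac{5p}{2}-2}(\gamma_1^p+\gamma_2^p)<1$ under the hypothesis (\ref{cvg gamma condition}). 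Absorbing $\kappa\varphi_n(T)$ into the left-hand side and applying the backward Gronwall inequality with respect to the measure $dA$ yields $\varphi_n(T)\le C'\varepsilon_n/(1-\kappa)\to 0$, which gives the uniform limit and, by bounded convergence combined with the uniform $\mathcal{S}^p$-bound from Lemma \ref{lemma estimate of barYi U K}, also $\|Y^{i,n}-\bar{Y}^i\|_{\mathcal{H}^{p,1}}\to 0$. The main obstacle is the bookkeeping of the constants in the $\mathcal{W}_p$-triangle inequality so that the obstacle contribution $\kappa$ stays strictly below $1$; this is precisely why the smallness assumption (\ref{cvg gamma condition}) is formulated with the exponent $2^{5p/2-2}$, and it is the only place where a contraction condition on $(\gamma_1,\gamma_2)$ is genuinely needed.
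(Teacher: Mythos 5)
Your proposal follows essentially the same route as the paper's proof: the $g$-expectation (Snell-envelope) representation of both $Y^{i}$ and $\bar Y^i$, the a priori estimate of Proposition 3.1 in \cite{lin2024mean}, the Lipschitz bounds with the Wasserstein triangle-inequality split into the empirical gap $\mathcal{W}_p^p(L_n[\mathbf{Y}],L_n[\bar{\mathbf{Y}}])$ plus the law-of-large-numbers error, absorption of the obstacle contribution under the smallness condition \eqref{cvg gamma condition}, and the backward Gronwall inequality in $dA$. The only cosmetic difference is that you invoke exchangeability at the outset to reduce the empirical average to a single-particle expectation, whereas the paper carries the average $V^{n,p}_t=\frac1n\sum_j e^{p\beta A_t}|Y^{j}_t-\bar Y^j_t|^p$ through the estimate and identifies it with $\mathbb{E}[e^{p\beta A_t}|Y^{i}_t-\bar Y^i_t|^p]$ only at the end.
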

\begin{proof}
For any $t \in[0, T]$, let $\vartheta \in \mathcal{T}_t^n$. By the estimates on BSDEs from Proposition 3.1 in \cite{lin2024mean}, we have
\begin{equation}
\label{eq esssup}
\begin{aligned}
& \left|Y_{\vartheta}^{i}-\bar{Y}_{\vartheta}^i\right|^2 \\
& \leq \underset{\tau \in \mathcal{T}_{\vartheta}^n}{\operatorname{ess} \sup }\left|\mathcal{E}_{\vartheta, \tau}^{\mathbf{f}^i \circ \mathbf{Y}}\left[h\left(\tau, Y_\tau^{i}, L_n\left[\mathbf{Y}_\tau\right]\right) \mathbf{1}_{\{\tau<T\}}+\xi^{i} \mathbf{1}_{\{\tau=T\}}\right]-\mathcal{E}_{\vartheta, \tau}^{\mathbf{f}^i \circ \bar{Y}^i}\left[h\left(\tau, \bar{Y}_\tau^i, \mathbb{P}_{Y_s \mid s=\tau}\right) \mathbf{1}_{\{\tau<T\}}+\xi^i \mathbf{1}_{\{\tau=T\}}\right]\right|^2 \\
& \leq \underset{\tau \in \mathcal{T}_{\vartheta}^n}{\operatorname{ess} \sup }\mathbb{E}_{\vartheta}\left[\eta\left(\int_{\vartheta}^\tau e^{2 \beta(A_s-A_\vartheta)}\left|\left(\mathbf{f}^i \circ \mathbf{Y}\right)\left(s, \widehat{Y}_s^{i, \tau}, \widehat{U}_s^{i, \tau}\right)-\left(\mathbf{f}^i \circ \bar{Y}^i\right)\left(s, \widehat{Y}_s^{i, \tau}, \widehat{U}_s^{i, \tau}\right)\right|^2 d A_s\right)\right. \\
& \quad \left.+\left(e^{\beta(A_\tau-A_\vartheta)}\left|h\left(\tau, Y_\tau^{i}, L_n\left[\mathbf{Y}_\tau\right]\right)-h\left(\tau, \bar{Y}_\tau^i, \mathbb{P}_{Y_s \mid s=\tau}\right)\right|\right)^2 \right] \\
& \leq \underset{\tau \in \mathcal{T}_{\vartheta}^n}{\operatorname{ess} \sup }\mathbb{E}_{\vartheta}\left[\int_{\vartheta}^\tau e^{ \beta(A_s-A_\vartheta)} \|A_T\|_\infty \eta C_f^2 \mathcal{W}_2^2\left(L_n\left[\mathbf{Y}_s\right], \mathbb{P}_{Y_s}\right) d A_s+\left(\gamma_1 e^{\beta(A_\tau-A_\vartheta)}\left|Y_\tau^{i}-\bar{Y}_\tau^i\right|\right.\right. \\
& \quad \left.\left.+\gamma_2 e^{\beta(A_\tau-A_\vartheta)} \mathcal{W}_2\left(L_n\left[\mathbf{Y}_\tau\right], \mathbb{P}_{Y_s \mid s=\tau}\right)\right)^2 \right], \\
&
\end{aligned}   
\end{equation}
where $\eta, \beta>0$ such that $\eta \leq \frac{1}{C_f^2}$ and $2\beta \geq 2 C_f+\frac{3}{\eta}$, and $\left(\widehat{Y}^{i, \tau}, \widehat{U}^{i, \tau}\right)$ is the solution of the BSDE associated with driver $\mathbf{f}^i \circ \mathbf{Y}$, terminal time $\tau$ and terminal condition $h\left(\tau, Y_\tau^{i}, L_n\left[\mathbf{Y}_s\right]_{s=\tau}\right) \mathbf{1}_{\{\tau<T\}}+$ $\xi^{i} \mathbf{1}_{\{\tau=T\}}$.
Therefore, we have
$$
e^{2 \beta \vartheta}\left|Y_{\vartheta}^{i}-\bar{Y}_{\vartheta}^i\right|^2 \leq \underset{\tau \in \mathcal{T}_{\vartheta}^n}{\operatorname{ess} \sup }  
 \mathbb{E}_{\vartheta}\left[G_{\vartheta, \tau}^{i, n} \right],
$$
where
$$
\begin{aligned}
G_{\vartheta, \tau}^{i, n}& :=\int_{\vartheta}^\tau 2 \eta\|A_T\|_\infty  C_f^2\left(\frac{1}{n} \sum_{j=1}^n e^{2 \beta A_s}\left|Y_s^{j}-\bar{Y}_s^j\right|^2\right) d A_s +2^{2}\left(\gamma_1^2+\gamma_2^2\right)\left(e^{2 \beta A_\tau}\left|Y_\tau^{i}-\bar{Y}_\tau^i\right|^2+\frac{1}{n} \sum_{j=1}^n e^{2 \beta A_\tau}\left|Y_\tau^{j}-\bar{Y}_\tau^j\right|^2\right) \\
&\quad +\left(2^{2} \gamma_2^2+2 \eta\|A_T\|_\infty  C_f^2\right) \sup _{0 \leq s \leq T} e^{2 \beta A_s} \mathcal{W}_2^2\left(L_n\left[\bar{\mathbf{Y}}_s\right], \mathbb{P}_{Y_s}\right).
\end{aligned}
$$
Setting $V_t^{n, 2}:=\frac{1}{n} \sum_{j=1}^n e^{p \beta A_t}\left|Y_t^{j}-\bar{Y}_t^j\right|^2$ and
$$
\Gamma_{n, 2}:=\left\{\left(2^{2} \gamma_2^2+2 \eta \|A_T\|_\infty C_f^2\right) \sup _{0 \leq s \leq T} e^{2 \beta A_s} \mathcal{W}_2^2\left(L_n\left[\bar{\mathbf{Y}}_s\right], \mathbb{P}_{Y_s}\right)\right\} .
$$
We obtain
$$
V_{\vartheta}^{n, 2} \leq \underset{\tau \in \mathcal{T}_{\vartheta}^n}{\operatorname{ess} \sup } \mathbb{E}_{\vartheta}\left[\int_{\vartheta}^\tau 2 \eta\|A_T\|_\infty C_f^2 V_s^{n,2} d A_s+2^3\left(\gamma_1^2+\gamma_2^2\right) V_\tau^{n, 2}+\Gamma_{n, 2}\right] .
$$
Hence, we have
$$
\mathbb{E}\left[V_{\vartheta}^{n, 2}\right] \leq 2^{3}\left(\gamma_1^2+\gamma_2^2\right) \sup _{\tau \in \mathcal{T}_{\vartheta}^n} \mathbb{E}\left[V_\tau^{n, 2}\right]+\mathbb{E}\left[\int_{\vartheta}^T 2 \eta \|A_T\|_\infty C_f^2 V_s^{n, 2} d A_s+\Gamma_{n, 2}\right] .
$$
Since for any $\vartheta \in \mathcal{T}_t^n, \mathcal{T}_{\vartheta}^n \subset \mathcal{T}_t^n$, we have
$$
\lambda \sup _{t \leq s \leq T} \mathbb{E}\left[V_s^{n,2}\right] \leq \lambda \sup _{\vartheta \in \mathcal{T}_t^n} \mathbb{E}\left[V_{\vartheta}^{n, 2}\right] \leq \mathbb{E}\left[\int_t^T 2 \eta \|A_T\|_\infty C_f^2 V_s^{n, 2} d A_s+\Gamma_{n, 2}\right] .
$$
In particular,
$$
\lambda \mathbb{E}\left[V_t^{n, 2}\right] \leq \mathbb{E}\left[\int_t^T 2\eta C_f^2 V_s^{n, 2} d A_s+\Gamma_{n, 2}\right],
$$
where $\lambda:=1-2^{3}\left(\gamma_1^2+\gamma_2^2\right)>0$ by the assumption (\ref{cvg gamma condition}). Now, by the backward Gronwall inequality, we obtain
$$
\mathbb{E}\left[V_t^{n, 2}\right] \leq \frac{e^{K_p}}{\lambda} \mathbb{E}\left[\Gamma_{n, 2}\right],
$$
where $K_p:=\frac{1}{\lambda} 2 \eta \|A_T\|_\infty C_f^2 $. Hence, since the choice of $t \in[0, T]$ is arbitrary, we must have
$$
\sup _{0 \leq t \leq T} \mathbb{E}\left[V_t^{n, 2}\right] \leq \frac{e^{K_p}}{\lambda} \mathbb{E}\left[\Gamma_{n, 2}\right].
$$
Indeed, we have $\mathbb{E}\left[V_t^{n, 2}\right]=\mathbb{E}\left[e^{2 \beta A_t}\left|Y_t^{i}-\bar{Y}_t^i\right|^2\right]$. Thus,
\begin{equation}
\label{eq EsupY leq law}
\sup _{0 \leq t \leq T} \mathbb{E}\left[e^{2 \beta A_t}\left|Y_t^{i}-\bar{Y}_t^i\right|^2\right] \leq \frac{e^{K_p}}{\lambda} \mathbb{E}\left[\Gamma_{n, 2}\right] \rightarrow 0
\end{equation}
as $n \rightarrow \infty$, in view of Theorem \ref{thm Law of Large Numbers}, as required.
\end{proof}

\section{Well-posedness and convergence result of MF-RBSDEs driven by a Poisson process}
\label{sec cvg rate}
In this section, we consider a special mean-field reflected BSDE with jumps under the Poisson process framework:
\begin{equation}
\label{eq reflected BSDE Poisson}
\left\{\begin{array}{l}
Y_t=\xi+\int_t^T f(s, Y_s, U_s, \mathbb{P}_{Y_s}) d s +\int_t^T d K_s-\int_t^T \int_E U_s(e) \tilde{\mu}(d s, d e), \quad \forall t \in[0, T], \quad \mathbb{P} \text {-a.s., } \\
Y_t \geq h(t, Y_t, \mathbb{P}_{Y_t}), \quad \forall t \in[0, T],\\
\int_0^T\left(Y_{t}-h\left(t, Y_{t}, \mathbb{P}_{Y_{t}}\right)\right) d K_t=0.
\end{array}\right.
\end{equation}
where we define a random measure $\mu(\omega,dt,de)$ on $\mathbb{R}_{+} \times E$, where $E \triangleq \mathbb{R}^{\ell} \backslash\{0\}$ is equipped with its Borel field $\mathcal{E}$, with compensator $\lambda(\omega, d t, d e)$.
During the paper, we assume that $\lambda$ is absolutely continuous with respect to the Lebesgue measure $d t$, i.e. $\lambda(\omega, d t, d e)=\nu_t(\omega, d e) d t$. Finally, we denote $\tilde{\mu}$ the compensated jump measure
$$
\widetilde{\mu}(\omega, d e, d t)=\mu(\omega, d e, d t)-\nu_t(\omega, d e) d t .
$$

Similar with Section \ref{sec chaos A}, we study a family of weakly interacting process 
\begin{equation}
\label{chaos RBSDE Poisson}
\left\{\begin{aligned}
& Y_t^{i}= \xi^{i}+\int_t^T f\left(s, Y_s^{i}, U_s^{i, i}, L_n\left[\mathbf{Y}_s\right]\right) d s+K_T^{i}-K_t^{i} -\int_t^T \int_{E} \sum_{j=1}^n U_s^{i, j}(e) \tilde{\mu}^j(d s, d e), \quad \forall t \in[0, T], \quad \mathbb{P} \text {-a.s., }\\
& Y_t^{i} \geq h\left(t, Y_t^{i, n}, L_n\left[\mathbf{Y}_t\right]\right), \quad \forall t \in[0, T],\\
& \int_0^T\left(Y_{t}^{i}-h\left(t, Y_{t}^{i}, L_n\left[\mathbf{Y}_{t}\right]\right)\right) d K_t^{i}=0.
\end{aligned}\right.   
\end{equation}
where ${\xi^i}_{1 \leq i \leq N}$, ${f^i}_{1 \leq i \leq N}$ and ${\tilde{\mu}^i}_{1 \leq i \leq N}$ are independent copies of $\xi, f$ and $\tilde{\mu}$. The augmented natural filtration of the family of $\{\tilde{\mu}^i\}_{1\leq i \leq N}$ is denoted by $\mathscr{F}_t^{(N)}$. Denote by $\mathbb{F}^N:=\left\{\mathcal{F}_t^N\right\}_{t \in[0, T]}$ the completion of the filtration generated by $\left\{\tilde{\mu}^i\right\}_{1 \leq i \leq N}$. Let $\mathcal{T}_t^N$ be the set of $\mathbb{F}^N$ stopping times with values in $[t, T]$. 

\subsection{Well-posedness result}
In order to show the convergence rate of the solution of this system to the solution of the mean field reflected BSDE (\ref{eq reflected BSDE Poisson}), we need some stricter assumptions.

\hspace*{\fill}\\
\hspace*{\fill}\\
\noindent(\textbf{H2'}) For every $\omega \in \Omega,\ t \in[0, T],\ r \in \mathbb{R}$, $\mu \in \mathcal{P}_1(\mathbb{R})$ the mapping
$
f(\omega, t, r, \cdot, \mu):L^2(E,\mathcal{B}(E),\nu_t(\omega,dy)) \rightarrow \mathbb{R}
$
satisfies:
for every $U \in {H_\nu^{2,2}}$,
$$
(\omega, t, r, \mu) \mapsto f\left(\omega, t, r, U_t(\omega, \cdot), \mu \right)
$$
is Prog $\otimes \mathscr{B}(\mathbb{R})$-measurable.

\hspace*{\fill}\\
\hspace*{\fill}\\

\noindent(\textbf{H3'})

\textbf{(a) (Continuity condition)} 
For every $\omega \in \Omega, t \in[0, T], y \in \mathbb{R}$, $u\in L^2(E,\mathcal{B}(E),\nu_t(\omega,dy))$, $\mu \in \mathcal{P}_1(\mathbb{R})$,  $(y, u, \mu) \longrightarrow f(t, y, u, \mu)$ is continuous. 
\hspace*{\fill}\\
\hspace*{\fill}\\

\textbf{(b) (Lipschitz condition)}  
There exists $\lambda\geq 0$, such that for every $\omega \in \Omega,\ t \in[0, T],\ y_1, y_2 \in \mathbb{R}$,\ $u\in L^2(E,\mathcal{B}(E),\nu_t(\omega,dy))$, $\mu_1, \mu_2 \in \mathcal{P}_1(\mathbb{R})$, we have
$$
\begin{aligned}
& \left|f(\omega, t, y_1, u_1, \mu_1)-f\left(\omega, t, y_2, u_2, \mu_2\right)\right| \leq \lambda\left(\left|y_1-y_2\right|+|u_1-u_2|_\nu+ \mathcal{W}_1\left(\mu_1, \mu_2\right)\right).
\end{aligned}
$$
\hspace*{\fill}\\

\textbf{(c) (Growth condition)}
For all $t \in[0, T], \ (y,u) \in \mathbb{R} \times  L^2(E,\mathcal{B}(E),\nu_t(\omega,dy)), \mu \in \mathcal{P}_1(\mathbb{R})$: $\ \mathbb{P}$-a.s, there exists $\lambda>0$ such that,
$$
-\frac{1}{\lambda} j_{\lambda}(t,- u)-\alpha_t-\beta\left(|y|+ \mathcal{W}_1\left(\mu, \delta_0\right)\right) \leq f(t, y, u, \mu) \leq \frac{1}{\lambda} j_{\lambda}(t, u)+\alpha_t+\beta\left(|y|+\mathcal{W}_1\left(\mu, \delta_0\right)\right),
$$
where $\{\alpha_t\}_{0 \leq t \leq T}$ is  a progressively measurable nonnegative stochastic process with $\|\alpha\|_{\mathcal{S}^{\infty}}<\infty$. 
\hspace*{\fill}\\
\hspace*{\fill}\\

\textbf{(d) (Bounded condition)}
For each $t \in[0, T]$, $f(t, 0,0, \delta)$ is bounded by some constant $M$, $\mathbb{P}$-a.s.
\hspace*{\fill}\\
\hspace*{\fill}\\

\textbf{(e) ($A_{\gamma}$-condition)}  
For all $t \in[0, T], M>0,\  y \in \mathbb{R},\   u_1, u_2 \in$ $\mathbb{L}^2\left(E,\mathcal{B}(E),\nu_t(\omega,dy)\right),\ \mu \in \mathcal{P}_1(\mathbb{R}),$ with $|y|,\ \|u_1\|_{\mathcal{J}^\infty},\ \left\|u_2\right\|_{\mathcal{J}^\infty} \leq M$, there exists a $\mathcal{P} \otimes \mathcal{E}$-measurable process $\gamma^{y, u_1, u_2}$ satisfying $d t \otimes d \mathbb{P}$-a.e.
$$
f(t, y, u_1,\mu)-f\left(t, y, u_2,\mu\right) \leq \int_E \gamma_t^{y, u_1,u_2}(x)\left[u_1(x)-u_2(x)\right] \nu(d x)
$$
and $C_M^1(1 \wedge|x|) \leq \gamma_t^{y,  u_1, u_2}(x) \leq C_M^2(1 \wedge|x|)$ with two constants $C_M^1, C_M^2$. Here, $C_M^1>-1$ and $C_M^2>0$ depend on $M$. (Hereafter, we frequently omit the superscripts to lighten the notation.)

\hspace*{\fill}\\
\hspace*{\fill}\\

\noindent(\textbf{H4'})
$h$ is a mapping from $[0, T] \times \Omega \times \mathbb{R} \times \mathcal{P}_1(\mathbb{R})$ into $\mathbb{R}$ such that
\hspace*{\fill}\\
\hspace*{\fill}\\

\textbf{(a) (Continuity condition)} 
For all $(y, \mu) \in \mathbb{R} \times \mathcal{P}_1(\mathbb{R}),h(\cdot, y, \mu)$ is a continuous process.
\hspace*{\fill}\\
\hspace*{\fill}\\

\textbf{(b) (Lipschitz condition)}
$h$ is Lipschitz w.r.t. $(y, \mu)$ uniformly in $(t, \omega)$, i.e. there exists two positive constants $\gamma_1$ and $\gamma_2$ such that $\mathbb{P}$-a.s. for all $t \in[0, T]$,
$$
\left|h\left(t, y_1, \mu_1\right)-h\left(t, y_2, \mu_2\right)\right| \leq \gamma_1\left|y_1-y_2\right|+\gamma_2 \mathcal{W}_1\left(\mu_1, \mu_2\right)
$$
for any $y_1, y_2 \in \mathbb{R}$ and $\mu_1, \mu_2 \in \mathcal{P}_1(\mathbb{R})$.
\hspace*{\fill}\\
\hspace*{\fill}\\

\textbf{(c)}
The final condition $\xi: \Omega \rightarrow \mathbb{R}$ is $\mathscr{F}_T$-measurable and essentially bounded, i.e., $\|\xi\|_{\infty}<\infty$. Besides, $\xi \geq h\left(T, \xi, \mathbb{P}_{\xi}\right)$.
\hspace*{\fill}\\
\hspace*{\fill}\\

\textbf{(d) (Bounded condition)}
For each $t \in[0, T]$, $h(t, 0, \delta)$ is bounded by some constant $M$, $\mathbb{P}$-a.s.
\hspace*{\fill}\\

We first give a well-posedness result of the mean field reflected BSDE (\ref{eq reflected BSDE Poisson}).

\begin{thm}
\label{thm bdd existence}
Assume (H1), (H2'), (H3') and  (H4') hold. Then the MF-BSDE (\ref{eq reflected BSDE Poisson}) has a unique solution $(Y, U, K)$ in the space $ \mathcal{S}^{\infty} \times \mathcal{J}^\infty \times \mathcal{A}^D$. 
\end{thm}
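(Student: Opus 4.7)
The plan is to construct the solution via a Picard fixed-point argument in a suitable closed ball of $\mathcal{S}^\infty$ on a sufficiently small time interval $[T-h,T]$, and then to stitch local solutions together to cover $[0,T]$. Concretely, I would introduce the map $\Psi:\mathcal{S}^\infty\to\mathcal{S}^\infty$ defined by $\Psi(\bar Y)=\hat Y$, where $(\hat Y,\hat U,\hat K)$ is the unique solution of the standard (non--mean-field) quadratic RBSDEJ with terminal value $\xi$, driver $(t,y,u)\mapsto f(t,y,u,\mathbb{P}_{\bar Y_t})$, and obstacle $h(t,\bar Y_t,\mathbb{P}_{\bar Y_t})$. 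A fixed point of $\Psi$ produces a solution of (\ref{eq reflected BSDE poission}).

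To see that $\Psi$ is well-defined, note that for a fixed $\bar Y\in\mathcal{S}^\infty$ the frozen driver inherits from (H3') all the properties (continuity, Lipschitz in $(y,u)$, quadratic-exponential growth controlled by $j_\lambda$, convexity/concavity, and the $A_\Gamma$-condition), while the frozen obstacle process lies in $\mathcal{S}^\infty$ by (H4'(b),(d)) together with $\bar Y\in\mathcal{S}^\infty$. Hence existence and uniqueness of $(\hat Y,\hat U,\hat K)\in\mathcal{S}^\infty\times\mathcal{J}^\infty\times\mathcal{A}^D$ is available from the single-agent theory of quadratic RBSDEJs with bounded data. Moreover, applying the a priori estimate of Lemma~\ref{submartingle property} to the frozen problem gives a uniform $\mathcal{S}^\infty$-bound on $\hat Y$ independent of $\bar Y$, so $\Psi$ stabilizes a closed ball $B_R\subset\mathcal{S}^\infty$ of sufficiently large radius $R$; on this ball $\hat U$ enjoys a uniform BMO-type bound (extracted from the exponential estimate in the proof of Lemma~\ref{submartingle property}).

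For the contraction, fix $\bar Y^1,\bar Y^2\in B_R$ and set $\hat Y^i=\Psi(\bar Y^i)$, $\delta \hat Y=\hat Y^1-\hat Y^2$, $\delta\hat U=\hat U^1-\hat U^2$. Writing the BSDE for $\delta\hat Y$ and using the $A_\Gamma$-condition (H3'(f)) to linearize the driver in $u$, one obtains a predictable kernel $\Gamma$ with $|\Gamma_t(x)|\le C_R(1\wedge|x|)$ and $\Gamma_t(x)>-1$, which defines an equivalent probability measure $\mathbb{Q}$ via the Doléans--Dade exponential of $\int\Gamma_s\,d\tilde\mu_s$; the corresponding density is a true martingale thanks to the BMO bound on $\hat U$. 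Under $\mathbb{Q}$, the Lipschitz condition (H3'(b)) on $f$ in $(y,\mu)$ and (H4'(b)) on $h$, combined with the Wasserstein control $\mathcal{W}_p(\mathbb{P}_{\bar Y^1_s},\mathbb{P}_{\bar Y^2_s})\le\|\bar Y^1-\bar Y^2\|_{\mathcal{S}^\infty}$ and a reflected-BSDE comparison via the Snell-envelope representation, yield an estimate of the form
$$
\|\delta\hat Y\|_{\mathcal{S}^\infty[T-h,T]}\le C\bigl(h+\gamma_1+\gamma_2\bigr)\|\bar Y^1-\bar Y^2\|_{\mathcal{S}^\infty[T-h,T]}.
$$
Choosing $h$ small (and assuming $\gamma_1,\gamma_2$ are accordingly controlled) makes $\Psi$ a strict contraction on $B_R\cap\mathcal{S}^\infty[T-h,T]$, giving a unique local fixed point. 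One then iterates backwards on intervals $[T-(k+1)h,T-kh]$ with terminal value $Y_{T-kh}$, which remains bounded uniformly in $k$ by Lemma~\ref{submartingle property}; pasting yields the global solution, while uniqueness on each piece follows from the uniqueness of the nonlinear Doob--Meyer decomposition of the nonlinear $g$-supermartingale $Y$.

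The main technical obstacle is the contraction step: because the driver has exponential-type growth in $u$, the linearization through the $A_\Gamma$-condition only produces a useful Girsanov change of measure if one first establishes a BMO control on $\hat U$ uniform in $\bar Y\in B_R$. I expect this to be handled by an exponential-transform argument in the spirit of the proof of Lemma~\ref{submartingle property}, applying It\^o's formula to $\exp\{\lambda e^{\beta t}|\hat Y_t|\}$ to derive an a priori bound on $\mathbb{E}[\int_\tau^T\int_E|\hat U_s(e)|^2\nu_s(de)\,ds\,|\,\mathcal{F}_\tau]$ that depends only on $\|\xi\|_\infty$, $\|h\|_\infty$, $\|\alpha\|_{\mathcal{S}^\infty}$, and $R$, and not on $\bar Y$ itself.
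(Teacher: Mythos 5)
Your proposal is correct and follows essentially the same route as the paper's proof: a fixed-point map defined through the Snell-envelope ($g$-expectation) representation of the reflected equation with frozen law and obstacle, a contraction estimate in $\mathcal{S}^\infty$ over a small interval $[T-h,T]$ obtained by linearizing the $u$-dependence via the $A_\Gamma$-condition and a Girsanov change of measure, and backward pasting of the local solutions. The only differences are cosmetic — the paper also freezes the driver's $y$-argument at $P$ so the auxiliary BSDE depends on $u$ alone, and it leaves implicit the BMO/true-martingale justification for the Girsanov density that you spell out — while both arguments share the same implicit requirement $\gamma_1+\gamma_2<1$ (the paper needs $\gamma_1+\gamma_2+2\lambda h<1$), which is not listed among the theorem's hypotheses.
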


In order to prove Theorem \ref{thm bdd existence}, we need to recall some proposition to analyze the solution map.

\begin{prop}
\label{prop standard}
Assume that:

(i)The map $(\omega, t) \mapsto f(\omega, t, \cdot)$ is $\mathbb{F}$-progressively measurable. There exist a positive $\mathbb{F}$-progressively measurable process $\left(\alpha_t, t \in[0, T]\right)$ such that $-\alpha_t-\frac{1}{\lambda} j_{\lambda}(t,- u)  \leq f(t,u) \leq \alpha_t + \frac{1}{\lambda} j_{\lambda}(t, u) $ $d t \otimes d \mathbb{P}$-a.e. $(\omega, t) \in \Omega \times[0, T]$.

(ii) $|\xi|,\left(\alpha_t, t \in[0, T]\right)$ are essentially bounded, i.e., $\|\xi\|_{\infty},\|\alpha\|_{\mathcal{S}^{\infty}}<\infty$.

(iii) There exists $\lambda\geq 0$, such that for every $\omega \in \Omega,\ t \in[0, T],\ y_1, y_2 \in \mathbb{R}$,\ $u_1, u_2\in L^2(\mathcal{B}(E),\nu_t;\mathbb{R})$, we have
$$
\begin{aligned}
& \left|f(\omega, t, y_1, u_1)-f\left(\omega, t, y_2, u_2\right)\right| \leq \lambda\left(\left|y_1-y_2\right|+ |u_1-u_2|_\nu\right).
\end{aligned}
$$

(iv) ( $A_{\gamma}$-condition). For all $t \in[0, T], M>0, y \in \mathbb{R},  u_1, u_2 \in$ $\mathbb{L}^2\left(\mathcal{B}(E),\nu_t; \mathbb{R}\right)$ with $|y|,\|u_1\|_{\mathcal{J}^\infty},\left\|u_2\right\|_{\mathcal{J}^\infty} \leq M$, there exists a $\mathcal{P} \otimes \mathcal{E}$-measurable process $\gamma^{y, u_1, u_2}$ satisfying $d t \otimes d \mathbb{P}$-a.e.
$$
f(t, y, u_1)-f\left(t, y, u_2\right) \leq \int_E \gamma_t^{y, u_1,u_2}(x)\left[u_1(x)-u_2(x)\right] \nu(d x)
$$
and $C_M^1(1 \wedge|x|) \leq \gamma_t^{y, u_1, u_2}(x) \leq C_M^2(1 \wedge|x|)$ with two constants $C_M^1, C_M^2$. Here, $C_M^1>-1$ and $C_M^2>0$ depend on $M$. (Hereafter, we frequently omit the superscripts to lighten the notation.)

Then BSDE
\begin{equation}
\label{eq standard}
\bar{Y}_t= \xi + \int_t^T f(s,\bar{U}_s(e)) ds -\int_t^T \int_E \bar{U}_s(e) \tilde{\mu}(ds,de)
\end{equation}
has a unique solution $(\bar{Y},\bar{U})$ in the space $\mathcal{S}^{\infty} \times \mathcal{J}^{\infty}$.
\end{prop}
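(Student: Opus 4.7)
The plan is to prove existence and uniqueness via truncation of the generator, uniform a priori bounds, and the $A_\gamma$-comparison argument tailored to the jump setting.

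First, I would establish a uniform $\mathcal{S}^\infty$-estimate for any solution. The argument is a strict simplification of the proof of Lemma \ref{submartingle property} (no reflection term, no mean-field coupling): applying It\^o's formula to $\exp\{\lambda e^{\beta t}|\bar Y_t| + \lambda\int_0^t e^{\beta s}\alpha_s\,ds\}$ and using the two-sided growth bound in (i) together with the convexity inequality $j_\lambda(ku)\ge kj_\lambda(u)$ for $k\ge 1$ and $|y+u|-|y|\ge \mathrm{sgn}(y)\,u$, one shows that this process is a local submartingale. The essential boundedness of $\xi$ and $\alpha$ from (ii) then yields
$$
\|\bar Y\|_{\mathcal{S}^\infty} \;\le\; M(\lambda, T, \|\xi\|_\infty, \|\alpha\|_{\mathcal{S}^\infty}).
$$

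Second, for existence, I would approximate $f$ by a sequence $\{f_n\}_{n\ge 1}$ of globally Lipschitz generators obtained by truncating the quadratic-exponential growth (for instance via a smooth cut-off of $u$ at level $n$) while preserving the pointwise growth bound in (i). For each $n$, the truncated equation is Lipschitz and admits a unique solution $(\bar Y^n,\bar U^n)\in \mathcal{S}^2\times \mathscr{M}^{2,2}$ by \cite{Tang_1994, barles1997backward}. The a priori bound above applies uniformly in $n$, giving $\|\bar Y^n\|_{\mathcal{S}^\infty}\le M$. To obtain a uniform $\mathcal{J}^\infty$-bound on $\bar U^n$, I would invoke (iv) with $u_1=\bar U^n$ and $u_2=0$: the integrand $\Gamma^{y,u_1,0}$ is then two-sidedly controlled, which together with the same exponential-transform estimate bounds the jump component. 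Monotone stability, in the spirit of Kobylanski's scheme adapted to jumps by Morlais \cite{morlais2010new}, yields a limit $(\bar Y,\bar U)\in \mathcal{S}^\infty\times\mathcal{J}^\infty$, and passing to the limit in (\ref{eq standard}) uses the growth bound, the uniform bounds, and dominated convergence.

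For uniqueness, let $(\bar Y^i,\bar U^i)\in \mathcal{S}^\infty\times\mathcal{J}^\infty$, $i=1,2$, be two solutions and set $R:=\max_{i}(\|\bar Y^i\|_{\mathcal{S}^\infty}\vee \|\bar U^i\|_{\mathcal{J}^\infty})$. By the $A_\gamma$-condition (iv), there exists a $\mathcal{P}\otimes\mathcal{E}$-measurable $\gamma$ with $-1<C_R^1(1\wedge |x|)\le \gamma_t(x)\le C_R^2(1\wedge |x|)$ such that the difference $(\delta Y,\delta U):=(\bar Y^1-\bar Y^2,\bar U^1-\bar U^2)$ satisfies a linear BSDE with jump coefficient $\gamma$. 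Since $\gamma>-1$ and is bounded by an integrable function, the Dol\'eans-Dade exponential $\mathcal{E}\bigl(\int_0^\cdot\!\int_E \gamma_s(x)\,\tilde\mu(ds,dx)\bigr)$ is a strictly positive true martingale; a change of probability reduces $\delta Y$ to a (local) martingale with bounded terminal value zero, forcing $\delta Y\equiv 0$ and hence $\delta U\equiv 0$ by uniqueness of the martingale representation with respect to $\tilde\mu$.

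The principal obstacle is the approximation step: extracting sufficient convergence of $\bar U^n$ in $\mathcal{J}^\infty$ under only the weak two-sided $j_\lambda$-growth is delicate, since the quadratic-exponential nonlinearity precludes any na\"ive BMO argument available in the purely Brownian case. The $A_\gamma$-condition is precisely what rescues this, providing a linearization with a two-sided bound on $\gamma$ that both localizes the integrand and allows a clean passage to the limit in the driver.
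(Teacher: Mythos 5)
The paper does not actually prove this proposition: its entire ``proof'' is the one-line citation ``The well-posedness can be deduced from \cite{fujii2018quadratic}.'' Your sketch is, in effect, a reconstruction of the argument of that reference (and of Morlais's scheme it builds on): a priori $\mathcal{S}^\infty$ bound by the exponential submartingale transform, existence by Lipschitz truncation plus monotone stability, uniqueness by linearization through the $A_\gamma$-condition and a Girsanov change of measure. The overall architecture is sound and consistent with the machinery the paper uses elsewhere (the same exponential transform appears in Lemma 4.1 and Appendix A). Two refinements are worth making. First, the uniform $\mathcal{J}^\infty$ bound on $\bar U^n$ does not need the $A_\gamma$-condition: since the jumps of $\bar Y^n$ are given by $\bar U^n$ evaluated at the jump marks, one has $\|\bar U^n\|_{\mathcal{J}^\infty}\le 2\|\bar Y^n\|_{\mathcal{S}^\infty}$ (this is exactly Corollary 1 of \cite{morlais2009quadratic}, which the paper itself invokes in the proof of Theorem 4.4), so the $\mathcal{S}^\infty$ bound transfers directly. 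Second, in the uniqueness step the $A_\gamma$-condition only yields a one-sided linearization, so the comparison must be run twice, once with $(u_1,u_2)=(\bar U^1,\bar U^2)$ and once with the roles reversed (with possibly different processes $\gamma$), to conclude $\delta Y\le 0$ and $\delta Y\ge 0$; and the true-martingale property of the Dol\'eans-Dade exponential should be justified (e.g.\ by a L\'epingle--M\'emin type criterion, available here because $|\gamma_t(x)|\le C_R(1\wedge|x|)$ with $C_R^1>-1$). With these points made precise, your argument is a legitimate self-contained proof where the paper offers only a reference.
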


\begin{proof}
The well-posedness can be deduced by Theorem 4.1 in \cite{fujii2018quadratic}.
\end{proof}

\begin{thm}
\label{thm fixed mean reflected}
Assume that the same conditions as Theorem \ref{thm bdd existence} hold. $P \in \mathcal{S}^{\infty}$. Then the following BSDE 
\begin{equation}
\label{eq fixed}
\left\{\begin{array}{l}
Y_t=\xi+\int_t^T f(s, P_s, U_s, \mathbb{P}_{P_s}) d s-\int_t^T\int_E U_s(e) \tilde{\mu}(ds,de)+K_T-\tilde{K}_t , \quad \forall t \in[0, T] , \quad \mathbb{P} \text {-a.s., }\\
Y_t \geq h(t, P_t, \mathbb{P}_{P_t}), \quad 0 \leq t \leq T,\\
\int_0^T\left(Y_{t }-h\left(t , P_{t }, \mathbb{P}_{P_{t }}\right)\right) d K_t=0.
\end{array}\right. 
\end{equation}
has a unique solution $(Y, U, K)$ in the space $ \mathcal{S}^{\infty} \times \mathcal{J}^\infty \times \mathcal{A}^D$.  
\end{thm}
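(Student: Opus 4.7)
The plan is to observe that with $P \in \mathcal{S}^\infty$ frozen, the coefficients decouple completely from the mean-field structure. Set
\[
\tilde f(s, u) := f(s, P_s, u, \mathbb{P}_{P_s}), \qquad \tilde h_s := h(s, P_s, \mathbb{P}_{P_s}).
\]
The driver $\tilde f$ depends only on $u$ (the $y$-slot is frozen by $P$). Since $P \in \mathcal{S}^\infty$, the growth condition (H3')(c) gives $-\alpha'_s - \tfrac{1}{\lambda}j_\lambda(s,-u) \leq \tilde f(s,u) \leq \alpha'_s + \tfrac{1}{\lambda}j_\lambda(s,u)$ with the bounded process $\alpha'_s := \alpha_s + \beta(|P_s| + \mathcal{W}_1(\mathbb{P}_{P_s},\delta_0)) \in \mathcal{S}^\infty$; the Lipschitz property, convexity/concavity, and the $A_\Gamma$-condition pass from $f$ to $\tilde f$ with the same constants; and by (H4') the obstacle $\tilde h$ lies in $\mathcal{S}^\infty$, is RCLL, and satisfies $\xi \geq \tilde h_T$. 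Thus \eqref{eq fixed} reduces to a standard reflected quadratic BSDEJ with bounded terminal condition and bounded obstacle.

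For existence I would proceed via a nonlinear Snell envelope representation. For each $\tau \in \mathcal{T}_{t,T}$, Proposition \ref{prop standard} yields a unique solution in $\mathcal{S}^\infty \times \mathcal{J}^\infty$ to the standard BSDEJ with driver $\tilde f$, terminal time $\tau$, and terminal value $\xi\mathbf{1}_{\{\tau=T\}} + \tilde h_\tau\mathbf{1}_{\{\tau<T\}}$; denote its time-$t$ value by $\mathcal{E}^{\tilde f}_{t,\tau}[\,\cdot\,]$. Define the candidate
\[
Y_t := \operatorname*{ess\,sup}_{\tau \in \mathcal{T}_{t,T}} \mathcal{E}^{\tilde f}_{t,\tau}\bigl[\xi\mathbf{1}_{\{\tau=T\}} + \tilde h_\tau \mathbf{1}_{\{\tau<T\}}\bigr].
\]
An application of Lemma \ref{submartingle property} to $\tilde f$ (no longer coupled to a fixed-point) yields $Y \in \mathcal{S}^\infty$. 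The $A_\Gamma$-condition (H3')(f) provides a comparison principle for quadratic BSDEJs, so $Y$ is the smallest $\mathcal{E}^{\tilde f}$-supermartingale dominating $\tilde h$ with terminal value $\xi$. Appealing to the nonlinear Doob--Meyer decomposition in the spirit of Lemma 4.1 of \cite{2023arXiv231020361L} (which, thanks to the boundedness of $\xi$, $\tilde h$, and $\alpha'$, is applicable), we obtain $U \in \mathcal{J}^\infty$ and a nondecreasing RCLL process $K \in \mathcal{A}^D$ such that the first equation of \eqref{eq fixed} holds. The Skorokhod flat-off condition follows from the optimality of the associated stopping times $\tau^*_t := \inf\{s \geq t : Y_s = \tilde h_s\}$ together with the minimality of the increasing part in the Doob--Meyer decomposition.

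For uniqueness, if $(Y^1, U^1, K^1)$ and $(Y^2, U^2, K^2)$ both solve \eqref{eq fixed} in $\mathcal{S}^\infty \times \mathcal{J}^\infty \times \mathcal{A}^D$, then each $Y^i$ is an $\mathcal{E}^{\tilde f}$-supermartingale dominating $\tilde h$ and equal to $\xi$ at $T$; the Skorokhod condition together with the comparison principle forces $Y^i$ to coincide with the Snell envelope above, giving $Y^1 = Y^2$. The uniqueness of $(U^i, K^i)$ then follows from the uniqueness of the Doob--Meyer decomposition of this $\mathcal{E}^{\tilde f}$-supermartingale. The main technical obstacle is setting up the Snell envelope / Doob--Meyer machinery in the quadratic-jump setting: this is exactly what assumptions (H3')(e)--(f) are designed to enable, lifting the Kobylanski-type reflected theory to the jump case by way of the $A_\Gamma$-linearization.
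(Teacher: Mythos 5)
Your reduction of \eqref{eq fixed} to a standard reflected BSDEJ with frozen data is exactly the right first move, and your overall architecture (Snell-envelope representation plus nonlinear Doob--Meyer) is sound, but it is a heavier route than the one the paper takes, and it leaves its hardest step undischarged. The paper observes that once $P$ is frozen the driver $\tilde f(s,u)=f(s,P_s,u,\mathbb{P}_{P_s})$ is globally Lipschitz in $u$ by (H3')(b), so existence and uniqueness of a flat solution follow at once from the Lipschitz reflected BSDEJ theory (Theorem 4.1 of Foresta) in the weighted $L^2$ spaces; the representation formula $Y_t=\operatorname{ess\,sup}_{\tau}\mathcal{E}^{f\circ P}_{t,\tau}[\xi\mathbf{1}_{\{\tau=T\}}+\tilde h_\tau\mathbf{1}_{\{\tau<T\}}]$ is then used only \emph{a posteriori}, together with the boundedness of $\xi$, $\tilde h$ and $\alpha$, to upgrade $Y$ to $\mathcal{S}^\infty$, and the estimate $\|U\|_{\mathcal{J}^\infty}\le 2\|Y\|_{\mathcal{S}^\infty}$ (Corollary 1 of Morlais) to upgrade $U$ to $\mathcal{J}^\infty$. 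By contrast, you build the solution from scratch as a nonlinear Snell envelope and then invoke a nonlinear Doob--Meyer decomposition in the quadratic-jump setting to produce $(U,K)$; you yourself flag this decomposition as ``the main technical obstacle'' and only assert that (H3')(e)--(f) make it work, which is precisely the step a referee would ask you to prove or cite precisely. Two concrete repairs: (i) use the Lipschitz-in-$u$ structure of $\tilde f$ to get existence and uniqueness directly from the Lipschitz reflected theory, so that no quadratic Doob--Meyer machinery is needed; (ii) supply the Morlais-type bound $\|U\|_{\mathcal{J}^\infty}\le 2\|Y\|_{\mathcal{S}^\infty}$ explicitly, since Lemma \ref{submartingle property} as stated only controls $\mathbb{E}[\exp\{\lambda Y_*\}]$ and says nothing about $U$; your appeal to that lemma for the $\mathcal{S}^\infty$ bound on $Y$ is fine in substance (the conditional version of its estimate gives a deterministic bound when $\xi$, $h_*$, $\alpha$ are bounded) but should be stated in that conditional form.
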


\begin{proof}
Obviously, the generator $f(s, P_s, U_s(e), \mathbb{P}_{P_s})$, the terminal $\xi$ and the loss function $h$ satisfies the conditions of Theorem 4.1 in \cite{foresta2021optimal}, thus the BSDE (\ref{eq fixed}) admits a unique deterministic flat solution $(\tilde{Y}, \tilde{U}, \tilde{K}) \in L^{2, \beta}(s) \times L^{2, \beta}(p) \times \mathcal{A}_D$.

Moreover, for each $t \in[0,T]$, in view of the Theorem 3.3 in \cite{quenez2014reflected}, we have
$$
Y_t=\underset{\tau \in \mathcal{T}_t}{\operatorname{ess} \sup } \mathcal{E}_{t, \tau}^{f\circ P}\left[\xi \mathbf{1}_{\{\tau=T\}}+h(\tau, P_\tau,\mathbb{P}_{P_s\mid s = \tau}) \mathbf{1}_{\{\tau<T\}}\right],
$$
where $(f \circ P)(t,y,u)=f\left(t,P, u, \mathbb{P}_{P_t}\right)$ and $\mathcal{E}_{t, \tau}^{f\circ P}\left[\xi \mathbf{1}_{\{\tau=T\}}+h(\tau, P_\tau,\mathbb{P}_{P_s\mid s = \tau}) \mathbf{1}_{\{\tau<T\}}\right]:=y_t^\tau \in \mathcal{S}^{\infty}$ is the solution to the following standard BSDE:
\begin{equation}
y_t^\tau=\xi \mathbf{1}_{\{\tau=T\}}+h_\tau(P_\tau,\mathbb{P}_{P_s\mid s = \tau}) \mathbf{1}_{\{\tau<T\}}+\int_t^\tau f\left(s,P_s, u_s^\tau, \mathbb{P}_{P_s})\right) d s-\int_t^\tau \int_E u_s^\tau(e) \tilde{\mu}(ds, de). 
\end{equation}
Consequently, $Y_t \in  \mathcal{S}^{\infty}$ and from Corollary 1 in \cite{morlais2009quadratic}, we know that $\|U\|_{\mathcal{J}^\infty} \leq 2\|Y\|_{\mathcal{S}^\infty} < \infty$ ,
Thus $(Y, U, K) \in \mathcal{S}^{\infty} \times J^\infty \times \mathcal{A}^D$ is the unique  solution to the BSDE (\ref{eq fixed}).
\end{proof}

We are now ready to complete the proof Theorem \ref{thm bdd existence}.
\begin{proof}[Proof of Theorem \ref{thm bdd existence}]
Let $P^i \in \mathcal{S}^{\infty}, i=1,2$. It follows from the $g$-expectation representation lemma that
$$
\Gamma\left(P^i\right)_t:=\operatorname{essup}_{\tau \in \mathcal{T}_t} y_t^{i, \tau}, \quad \forall t \in[0, T]
$$
in which $y_t^{i,\tau}$ is the solution to the following BSDE:
\begin{equation}
y_t^{i,\tau}=\xi \mathbf{1}_{\{\tau=T\}}+h_\tau(P^i_\tau,\mathbb{P}_{P^i_s\mid s = \tau}) \mathbf{1}_{\{\tau<T\}}+\int_t^\tau f\left(s, P_s^i, u_s^{i,\tau}, \mathbb{P}_{P_s^i})\right) d s-\int_t^\tau \int_E u_s^{i,\tau}(e) \tilde{\mu}(ds, de). 
\end{equation}
For each $t \in[0, T]$, denote by
$$
f^{P^i}\left(s, u_s^{i,\tau}\right) = f\left(t, P_t^i, u_t^{i,\tau}, \mathbb{P}_{P_t^i})\right).
$$
Then, the pair of processes $\left(y^{1, \tau}-y^{2, \tau}, u^{1, \tau}-u^{2, \tau}\right)$ solves the following BSDE in $t \in [T-h,T]$:
$$
\begin{aligned}
y_t^{1, \tau}-y_t^{2, \tau}&=h\left(\tau, P_\tau^1,\mathbf{P}_{P_s^1 \mid {s=\tau}}\right) \mathbf{1}_{\{\tau<T\}}-h\left(\tau, P_\tau^2,\mathbf{P}_{P_s^2 \mid {s=\tau}}\right) \mathbf{1}_{\{\tau<T\}} + \int_t^\tau \left(f^{P^1}\left(s, u_s^{1,\tau}\right) -f^{P^2}\left(s, u_s^{2,\tau}\right)\right) ds \\
&\quad +\int_t^\tau \int_E \left(u_s^{1,\tau}(e)-u_s^{2,\tau}(e)\right) \tilde{\mu}(ds,de) \mathbf{1}_{[0, \tau]}(s) d s\\
& = h\left(\tau, P_\tau^1,\mathbf{P}_{P_s^1 \mid {s=\tau}}\right) \mathbf{1}_{\{\tau<T\}}-h\left(\tau, P_\tau^2,\mathbf{P}_{P_s^2 \mid {s=\tau}}\right) \mathbf{1}_{\{\tau<T\}} \\
&\quad + \int_t^\tau \left[f^{P^1}\left(s, u_s^{1,\tau}\right) -f^{P^2}\left(s, u_s^{1,\tau}\right) + f^{P^2}\left(s, u_s^{1,\tau}\right)-f^{P^2}\left(s, u_s^{2,\tau}\right)\right] ds \\
&\quad +\int_t^\tau \int_E \left(u_s^{1,\tau}(e)-u_s^{2,\tau}(e)\right) \tilde{\mu}(ds,de) \mathbf{1}_{[0, \tau]}(s) d s\\
&\leq \mathbb{E}_t^{\tilde{\mu}^\gamma}\left[h\left(\tau, P_\tau^1,\mathbf{P}_{P_s^1 \mid {s=\tau}}\right) \mathbf{1}_{\{\tau<T\}}-h\left(\tau, P_\tau^2,\mathbf{P}_{P_s^2 \mid {s=\tau}}\right) \mathbf{1}_{\{\tau<T\}} + \int_t^\tau \lambda \left(\left|P_s^1 -P_s^2\right| + \mathcal{W}_1\left(\mathbb{P}_{P_s^1},\mathbb{P}_{P_s^2}\right) \right)ds\right]\\
&\leq \mathbb{E}_t^{\tilde{\mu}^\gamma}\left[\left(\gamma_1+\lambda h\right)\sup_{T-h,T} \left|P_s^1 -P_s^2\right| + \left(\gamma_2+\lambda h\right)\sup_{T-h,T} \mathbb{E}\left[\left|P_s^1 -P_s^2\right|\right]\right],
\end{aligned}
$$
where $\tilde{\mu}^\gamma := \tilde{\mu} -<\tilde{\mu}, \tilde{\gamma}\cdot \tilde{\mu}> $, $\tilde{\gamma} = \gamma(u,u')$.
Thus, we can deduce that
$$\left\|\Gamma(P^1)-\Gamma(P^2) \right\|_{\mathcal{S}^\infty} \leq \left(\gamma_1 +\gamma_2 +2 \lambda h  \right)\left\|P^1-P^2 \right\|_{\mathcal{S}^\infty}.$$

We can then find a small enough constant $h$ depending only on $\lambda, \gamma_1$ and $\gamma_2$ such that $\gamma_1+\gamma_2+2 \lambda h<1$. It is now obvious that $\Gamma$ defines a contraction map on the time interval $[T-h, T]$.
The uniqueness of the global solution on $[0, T]$ is inherited from the uniqueness of the local solution on each small time interval. It suffices to prove the existence.

We already know that there exists a constant $\delta>0$ depending only on $ \lambda, \gamma_1$ and $\gamma_2$, such that the mean-field reflected BSDE (\ref{eq reflected BSDE Poisson}) admits a unique solution
$$
\left(Y^1, U^1, K^1\right) \in \mathcal{S}_{[T-\delta, T]}^\infty \times \mathcal{J}_{[T-\delta, T]}^\infty\ \times \mathcal{A}_{[T-\delta, T]}^D
$$
on the time interval $[T-\delta, T]$. Next, taking $T-\delta$ as the terminal time, then the mean-field reflected BSDE (\ref{eq reflected BSDE Poisson}) admits a unique solution
$$
\left(Y^2, U^2, K^2\right) \in \mathcal{S}_{[T-2 \delta, T-\delta]}^\infty \times \mathcal{J}_{[T-2 \delta, T-\delta]}^\infty \times \mathcal{A}_{[T-2 \delta, T-\delta]}^D
$$
on the time interval $[T-2 \delta, T-\delta]$. Denote by
$$
\begin{aligned}
& Y_t=\sum_{i=1}^2 Y_t^i \mathbf{1}_{[T-i \delta, T-(i-1) \delta)}+Y_T^1 \mathbf{1}_{\{T\}}, U_t=\sum_{i=1}^2 U_t^i \mathbf{1}_{[T-i \delta, T-(i-1) \delta)}+U_T^1 \mathbf{1}_{\{T\}}, \\
& K_t=K_t^2 \mathbf{1}_{[T-i \delta, T-(i-1) \delta)}+\left(K_{T-\delta}^2+K_t^1\right) \mathbf{1}_{[T-\delta, T]} .
\end{aligned}
$$

It is easy to check that $(Y, U, K) \in \mathcal{S}_{[T-2 \delta, T-\delta]}^\infty \times \mathcal{J}_{[T-2 \delta, T-\delta]}^\infty \times \mathcal{A}_{[T-2 \delta, T-\delta]}^D$ is a solution to the mean-field reflected BSDE (\ref{eq reflected BSDE Poisson}). Repeating this procedure, we obtain a global solution $(Y, U, K) \in \mathcal{S}^\infty \times \mathcal{J}^\infty \times \mathcal{A}^D$. The proof of the theorem is complete.
\end{proof}

\subsection{Convergence result}
The following proposition is crucial to prove convergence result.
\begin{prop}
\label{prop regularity of mean-field}
Let $p \geq 2$ and assume that the same conditions as Theorem \ref{thm bdd existence} hold. Let $0 \leq t_1 \leq t_2 \leq T$ be such that $t_2 - t_1 \leq 1$. There exists a constant $C$ depending on $p, T, \kappa$, and $h$ such that the following hold:\\
(i) $\forall t_1 \leq s \leq t \leq t_2, \quad \mathbb{E}\left[\left|Y_t-Y_s\right|^p\right] \leq C|t-s|$.\\
(ii) $\forall t_1 \leq r<s<t \leq t_2, \quad \mathbb{E}\left[\left|Y_s-Y_r\right|^p\left|Y_t-Y_s\right|^p\right] \leq C|t-r|^2$.\\
$Y$ is the first part of the solution of BSDE (\ref{eq reflected BSDE Poisson}).
\end{prop}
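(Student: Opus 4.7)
The plan is to exploit the a priori bounds furnished by Theorem~\ref{thm bdd existence} and Lemma~\ref{submartingle property}, namely $Y\in\mathcal{S}^\infty$, $U\in\mathcal{J}^\infty$, and $K\in\mathcal{A}^D\subset\mathcal{S}^\infty$, together with the essential boundedness of the generator, and to split $|Y_t-Y_s|$ via the BSDE dynamics. Writing the BSDE \eqref{eq reflected BSDE poission} between $s\leq t$ gives
\begin{equation*}
Y_t-Y_s=-\int_s^t f(r,Y_r,U_r,\mathbb{P}_{Y_r})\,dr-(K_t-K_s)+\int_s^t\int_E U_r(e)\,\tilde\mu(dr,de).
\end{equation*}
Raising to the $p$-th power and using $|a+b+c|^p\leq 3^{p-1}(|a|^p+|b|^p+|c|^p)$ reduces matters to three separate estimates. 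The generator contribution is bounded by $M_f^p|t-s|^p\leq M_f^p|t-s|$ (using $p\geq 2$ and $|t-s|\leq 1$), since $f(r,Y_r,U_r,\mathbb{P}_{Y_r})$ is $d\mathbb{P}\otimes dr$-essentially bounded once $Y,U,\alpha$ are in $\mathcal{S}^\infty,\mathcal{J}^\infty,\mathcal{S}^\infty$ respectively. By the Burkholder--Davis--Gundy inequality for purely discontinuous martingales and the $\nu$-essential bound $\|U\|_{\mathcal{J}^\infty}<\infty$, the stochastic-integral term is bounded in $L^p$ by a constant times $|t-s|^{p/2}+|t-s|$, hence by $C|t-s|$. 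For the reflection term I would use that $K$ is nondecreasing and bounded, so
\begin{equation*}
\mathbb{E}[(K_t-K_s)^p]\leq \|K\|_{\mathcal{S}^\infty}^{p-1}\,\mathbb{E}[K_t-K_s],
\end{equation*}
which reduces (i) to proving $\mathbb{E}[K_t-K_s]\leq C|t-s|$.

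Taking expectations in the BSDE gives $\mathbb{E}[K_t-K_s]=\mathbb{E}[Y_s]-\mathbb{E}[Y_t]-\mathbb{E}[\int_s^t f\,dr]$, so the task becomes the scalar bound $|\mathbb{E}[Y_t]-\mathbb{E}[Y_s]|\leq C|t-s|$. I would derive this from the $g$-expectation/Snell-envelope representation $Y_t=\operatorname{ess} \sup_{\tau\in\mathcal{T}_t}\mathcal{E}^{f\circ Y}_{t,\tau}[\xi\mathbf{1}_{\{\tau=T\}}+h(\tau,Y_\tau,\mathbb{P}_{Y_\tau})\mathbf{1}_{\{\tau<T\}}]$ used in Theorem~\ref{thm fixed mean reflected}. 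Fix an $\varepsilon$-optimal $\tau^\ast\in\mathcal{T}_t$ for $Y_t$; since $\tau^\ast\in\mathcal{T}_s$ as well, we get $Y_s\geq\mathcal{E}^{f\circ Y}_{s,\tau^\ast}[\cdot]$, and the difference $\mathcal{E}^{f\circ Y}_{s,\tau^\ast}[\cdot]-\mathcal{E}^{f\circ Y}_{t,\tau^\ast}[\cdot]$ solves a BSDE on $[s,t]$ whose expectation is $\mathbb{E}[\int_s^t f\,dr]=O(|t-s|)$. Swapping the roles and replacing an $\varepsilon$-optimizer $\tau^{\ast\ast}$ for $Y_s$ by $\tau^{\ast\ast}\vee t$ yields the reverse one-sided inequality, the stray piece on $\{\tau^{\ast\ast}<t\}$ being absorbed using the Lipschitz dependence of $h$ on $(y,\mu)$ and the $L^\infty$-bound on $Y$. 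Combining, $|\mathbb{E}[Y_t]-\mathbb{E}[Y_s]|\leq C|t-s|$ and (i) follows.

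For (ii), I would condition on $\mathcal{F}_s$. Since the dynamics of $(Y,U,K)$ on $[s,T]$ are entirely driven by $\xi$ and the generator (the mean-field argument $\mathbb{P}_{Y_r}$ being deterministic), the proof of (i) goes through verbatim with $\mathbb{E}$ replaced by $\mathbb{E}[\cdot\mid\mathcal{F}_s]$, yielding $\mathbb{E}[|Y_t-Y_s|^p\mid\mathcal{F}_s]\leq C|t-s|$ almost surely. Plugging this into
\begin{equation*}
\mathbb{E}[|Y_s-Y_r|^p|Y_t-Y_s|^p]=\mathbb{E}\!\left[|Y_s-Y_r|^p\,\mathbb{E}[|Y_t-Y_s|^p\mid\mathcal{F}_s]\right]\leq C|t-s|\,\mathbb{E}[|Y_s-Y_r|^p]
\end{equation*}
and invoking (i) once more gives $C^2|t-s||s-r|\leq C^2|t-r|^2$.

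\textbf{Main obstacle.} The delicate step is the $K$-increment estimate $\mathbb{E}[K_t-K_s]\leq C|t-s|$. Because neither $t\mapsto h(t,y,\mu)$ nor $\xi$ carries a built-in time-Lipschitz modulus, this bound has to be extracted from the optimal-stopping representation together with the $A_\Gamma$-condition (H3'(f)); the Lipschitz dependence of $h$ in $(y,\mu)$ and the $L^\infty$-control on $Y$ are precisely what enable one to absorb the $\{\tau^{\ast\ast}<t\}$ contribution in the swap above into an $O(|t-s|)$ term. This is where the bounded terminal / bounded obstacle hypotheses of Section~\ref{sec cvg rate} are truly exploited.
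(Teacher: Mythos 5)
Your decomposition of $Y_t-Y_s$ through the reflected dynamics is a genuinely different route from the paper's, and the driver and martingale estimates in it are fine, but the treatment of the reflection term leaves a real gap. You reduce $\mathbb{E}[(K_t-K_s)^p]$ to $\mathbb{E}[K_t-K_s]\leq C|t-s|$ and then to $|\mathbb{E}[Y_t]-\mathbb{E}[Y_s]|\leq C|t-s|$ --- but the latter is (a weak form of) exactly the time-regularity you are trying to prove, and the optimal-stopping argument you sketch for it does not close. In the one-sided comparison where you replace an $\varepsilon$-optimizer $\tau^{**}\in\mathcal{T}_s$ by $\tau^{**}\vee t$, the event $\{\tau^{**}<t\}$ produces the difference between the payoff $h(\tau^{**},Y_{\tau^{**}},\mathbb{P}_{Y_{\tau^{**}}})$ delivered at $\tau^{**}$ and the continuation value at $t$; the Lipschitz continuity of $h$ in $(y,\mu)$ together with $\|Y\|_{\mathcal{S}^\infty}<\infty$ only gives an $O(1)$ bound on that event, no time-modulus for $h$ or for the obstacle $h(\cdot,Y_\cdot,\mathbb{P}_{Y_\cdot})$ is assumed, and $\mathbb{P}(\tau^{**}<t)$ is not controlled by $|t-s|$. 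You correctly flag this as ``the main obstacle,'' but flagging it is not resolving it, and as written the reduction is circular.

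The paper sidesteps $K$ altogether: it starts from the representation $Y_t=\operatorname{ess\,sup}_{\tau\in\mathcal{T}_t}y_t^\tau$ established in Lemma \ref{submartingle property} and uses $\left|\operatorname{ess\,sup}_\tau a_\tau-\operatorname{ess\,sup}_\tau b_\tau\right|\leq\operatorname{ess\,sup}_\tau\left|a_\tau-b_\tau\right|$, so that only increments $y_t^\tau-y_s^\tau$ of the \emph{non-reflected} BSDEs (\ref{tauBSDE}) need to be estimated. For a fixed $\tau\geq t$ these increments contain no reflection term and no obstacle at all --- only the driver integral and the jump martingale over $[s,t]$, i.e.\ precisely the two contributions you already control via the growth condition and Burkholder--Davis--Gundy. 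Part (ii) is then obtained exactly as you propose, by conditioning the inner increment and multiplying the two bounds. If you want to keep your direct decomposition, you would first have to establish $\mathbb{E}[K_t-K_s]\leq C|t-s|$ by an independent argument; passing through the Snell-envelope representation makes that step unnecessary.
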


\begin{proof}
(i). According to the representation theorem,
\begin{equation}
\label{eq Y and y}
\begin{aligned}
\mathbb{E}\left[\left|Y_t-Y_s\right|^p\right] &= \mathbb{E}\left[\left|\underset{\tau \geq t}{\operatorname{ess} \sup } y^{\tau}_t-\underset{\tau \geq s}{\operatorname{ess} \sup } y^{\tau}_s\right|^p\right] \\
&\leq  \mathbb{E}\left[ \underset{\tau \geq t}{\operatorname{ess} \sup }\left|y^{\tau}_t-y^{\tau}_s  \right|^p\right],
\end{aligned}
\end{equation}
where $y^{\tau}_t$ is the solution of BSDE
$$
y_t^\tau=\xi \mathbf{1}_{\{\tau=T\}}+h_\tau(P_\tau,\mathbb{P}_{Y_s\mid s = \tau}) \mathbf{1}_{\{\tau<T\}}+\int_t^\tau f\left(s,Y_s, u_s^\tau, \mathbb{P}_{Y_s})\right) d s-\int_t^\tau \int_E u_s^\tau(e) \tilde{\mu}(ds, de).
$$
Then, by H\"older's inequality
$$
\begin{aligned}
\left|y_t^\tau-y_s^\tau\right|^p &=\left(\left|\int_s^t f\left(u,Y_u,u_u^\tau, \mathbb{P}_{Y_u} \right)du\right| +\left|\int_s^t \int_E u_u^\tau(e) \tilde{\mu}(du,de)\right|\right)^p\\
&\leq 2^{p-1}\left[\left(\int_s^t \left|f\left(u,Y_u,u_u^\tau, \mathbb{P}_{Y_u} \right)\right|du\right)^p +\left|\int_s^t \int_E u_u^\tau(e) \tilde{\mu}(du,de)\right|^p\right]\\
&\leq 2^{p-1}\left[(t-1)^{p-1}\int_s^t \left|f\left(u,Y_u,u_u^\tau, \mathbb{P}_{Y_u} \right)\right|^p du +\left|\int_s^t \int_E u_u^\tau(e) \tilde{\mu}(du,de)\right|^p\right]
\end{aligned}
$$
Thus, we can deduce that
$$
\begin{aligned}
\mathbb{E}\left[\left|y_t^\tau-y_s^\tau\right|^p\right]&\leq 2^{p-1}\left\{\mathbb{E}\left[(t-s)^{p-1}\int_s^t \left|f\left(u,Y_u,u_u^\tau, \mathbb{P}_{Y_u} \right)\right|^p du \right]\right.\\
&\quad \left.+\mathbb{E}\left[\left(\int_s^t\int_E\left|u_u^\tau(e)\right|^2\nu(de)du \right)^{\frac{p}{2}}\right]+\mathbb{E}\left[\int_s^t \int_E|u_u^\tau(e)|^p \nu(de)d u \right]\right\}\\
&\leq2^{p-1}\left\{\mathbb{E}\left[(t-s)^{p-1}\int_s^t \left| \alpha_t +\beta |Y_u| + \beta \mathcal{W}_1\left(\mathbb{P}_{Y_u},\delta_0\right)+\frac{1}{\lambda}j_{\lambda}(u, u^\tau_s(e)) \right|^p du \right]\right.\\
&\quad \left.+\mathbb{E}\left[\left(\int_s^t\int_E\left|u_u^\tau(e)\right|^2\nu(de)du \right)^{\frac{p}{2}}\right]+\mathbb{E}\left[\int_s^t \int_E|u_u^\tau(e)|^p \nu(de)d u \right]\right\}\\
&\leq 2^{p-1}\left\{(t-s)^{p-1} C_1 \left(\mathbb{E}\left[\sup_{0\leq t\leq T} |Y_t|^p\right]+1\right)\right.\\
&\quad \left.+ (t-s)^{p-1} C_2 \mathbb{E}\left[\left(\int_s^t\int_E e^{u^\tau_u(e)} -1 - {u^\tau_u(e)} \nu(de)du\right)^p\right]\right. \\
&\quad \left.+C_3 |t-s|^{p / 2} \mathbb{E}\left[\|u_s^\tau(e)\|^p_{\mathcal{J}^\infty}\left(\int_E \nu(de)\right)^{p / 2}\right]+C_4 |t-s| \mathbb{E}\left[\|u_s^\tau(e)\|^p_{\mathcal{J}^\infty} \int_E \nu(de) \right]\right\}.\\ 
\end{aligned}
$$
By (\ref{eq Y and y}) we can also obtain
$$
\begin{aligned}
\mathbb{E}\left[\left|Y_t-Y_s\right|^p\right] &\leq 2^{p-1}\left\{(t-s)^{p-1} C_1 \left(\mathbb{E}\left[\sup_{0\leq t\leq T} |Y_t|^p\right]+1\right)\right.\\
&\quad \left.+ (t-s)^{p-1} C_2 \mathbb{E}\left[\left(\int_s^t\int_E e^{u^\tau_u(e)} -1 - {u^\tau_u(e)} \nu(de)du\right)^p\right]\right. \\
&\quad \left.+C_3 |t-s|^{p / 2} \mathbb{E}\left[\|u_s^\tau(e)\|^p_{\mathcal{J}^\infty}\left(\int_E \nu(de)\right)^{p / 2}\right]+C_4 |t-s| \mathbb{E}\left[\|u_s^\tau(e)\|^p_{\mathcal{J}^\infty} \int_E \nu(de) \right]\right\}.  
\end{aligned}
$$
Finally, in view of Theorem \ref{thm bdd existence}, we conclude that there exists a constant $C$, such that
$$
\forall 0 \leq s \leq t \leq T, \quad \mathbb{E}\left[\left|Y_t-Y_s\right|^p\right] \leq C|t-s| .
$$

(ii). According to the representation theorem, 
$$
\begin{aligned}
\mathbb{E}\left[\left|Y_s-Y_r\right|^p\left|Y_t-Y_s\right|^p\right] &= \mathbb{E}\left[\left|\underset{\tau \geq s}{\operatorname{ess} \sup } y^{\tau}_t-\underset{\tau \geq r}{\operatorname{ess} \sup } y^{\tau}_s\right|^p  \left|\underset{\tau \geq t}{\operatorname{ess} \sup } y^{\tau}_t-\underset{\tau \geq s}{\operatorname{ess} \sup } y^{\tau}_s\right|^p\right] \\
& \leq \mathbb{E}\left[ \underset{\tau \geq s}{\operatorname{ess} \sup }\left|y^{\tau}_t-y^{\tau}_s  \right|^p \underset{\tau \geq r}{\operatorname{ess} \sup }\left|y^{\tau}_s-y^{\tau}_r  \right|^p\right]\\
&\leq \mathbb{E}\left[ \underset{\tau \geq s}{\operatorname{ess} \sup }\left|y^{\tau}_t-y^{\tau}_s  \right|^p \underset{\tau \geq r}{\operatorname{ess} \sup } \left\{\mathbb{E}_r\left[\left| \int_r^s f(u,Y_u,u_u^\tau,\mathbb{P}_{Y_u})du\right|^p\right]+ \mathbb{E}_r\left[\left| \int_r^s \int_E u_u^\tau(e)\tilde{\mu}(du,de)\right|^p\right]  \right\}\right]\\
&\leq \mathbb{E}\left[ \underset{\tau \geq s}{\operatorname{ess} \sup }\left|y^{\tau}_t-y^{\tau}_s  \right|^p \left\{\mathbb{E}_r\left[\underset{\tau \geq r}{\operatorname{ess} \sup } \left\| u_u^\tau(e)\right\|^p_{\mathcal{J}^\infty}\int_E \nu(de)\right]|s-r|  \right\}\right]\\
&\leq C|t-s||s-r|\\
&\leq C|t-r|^2.
\end{aligned}
$$
\end{proof}

Let us consider $\left(\bar{Y}^i, \bar{U}^i, \bar{K}^i\right)$ independent copies of $(Y, U, K)$ which are the solution of MF-BSDE (\ref{eq reflected BSDE Poisson}). More precisely,  for each $i=1, \ldots, n$, $\left(\bar{Y}^i, \bar{U}^i, \bar{K}^i\right)$, is the unique solution of the reflected MF-BSDE
\begin{equation}
\label{copy BSDE ds}
\left\{\begin{array}{l}
\bar{Y}_t^i=\xi^i+\int_t^T f\left(s, \bar{Y}_s^i, \bar{U}_s^i, \mathbb{P}_{\bar{Y}_s^i}\right) ds+\bar{K}_T^i-\bar{K}_t^i-\int_t^T \int_{E} \bar{U}_s^i(e) \tilde{\mu}^i(d s, d e), \forall t \in[0, T], \quad \mathbb{P} \text {-a.s. }\\
\bar{Y}_t^i \geq h\left(t, \bar{Y}_t^i, \mathbb{P}_{\bar{Y}_t^i}\right), \quad \forall t \in[0, T], \quad \mathbb{P} \text {-a.s. }\\
\int_0^T\left(\bar{Y}_{t}^i-h\left(t, \bar{Y}_{t}^i, \mathbb{P}_{\bar{Y}_{t}}\right)\right) d K_t^i=0 \quad \mathbb{P} \text {-a.s.}
\end{array}\right.
\end{equation}

\begin{thm}
Assume assumptions (H2'), (H3') and (H4') hold. Define $ \Delta Y^i:=Y^i-\bar{Y}^i$ for $1 \leq i \leq N$. Then
$$
\sup _{0 \leq t \leq T} E\left[\left|\Delta Y^i\right|^2\right]=\mathcal{O}\left(N^{-1 / 2}\right).
$$
\end{thm}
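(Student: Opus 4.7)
The plan is to apply the $g$-expectation (Snell envelope) representation to both $Y^i$ and $\bar{Y}^i$ and then compare via a BSDEJ stability estimate. Exactly as in Theorem~\ref{thm existence and uniqueness}, one writes
\begin{align*}
Y^i_t &= \operatorname{ess\,sup}_{\tau \in \mathcal{T}_t^N} \mathcal{E}_{t,\tau}^{\mathbf{f}^i \circ \mathbf{Y}}\bigl[\xi^i \mathbf{1}_{\{\tau=T\}} + h(\tau, Y^i_\tau, L_N[\mathbf{Y}_\tau])\mathbf{1}_{\{\tau<T\}}\bigr],\\
\bar{Y}^i_t &= \operatorname{ess\,sup}_{\tau \in \mathcal{T}_t^N} \mathcal{E}_{t,\tau}^{f \circ \bar{Y}^i}\bigl[\xi^i \mathbf{1}_{\{\tau=T\}} + h(\tau, \bar{Y}^i_\tau, \mathbb{P}_{\bar{Y}^i_\tau})\mathbf{1}_{\{\tau<T\}}\bigr].
\end{align*}
Using $|\operatorname{ess\,sup} a - \operatorname{ess\,sup} b| \le \operatorname{ess\,sup}|a-b|$ together with the $L^2$ a priori estimate for Lipschitz BSDEJs (Proposition~3.1 in \cite{lin2024mean}, transcribed from the $dA_s$ setting to $ds$) and the Lipschitz assumptions (H3'), (H4'), this produces the pointwise bound
$$|Y^i_t - \bar{Y}^i_t|^2 \le C\operatorname{ess\,sup}_{\tau \in \mathcal{T}_t^N}\mathbb{E}_t\!\left[\int_t^\tau \mathcal{W}_2^2(L_N[\mathbf{Y}_s],\mathbb{P}_{\bar{Y}^i_s})\,ds + \gamma_1^2|Y^i_\tau-\bar{Y}^i_\tau|^2 + \gamma_2^2\,\mathcal{W}_2^2(L_N[\mathbf{Y}_\tau],\mathbb{P}_{\bar{Y}^i_\tau})\right],$$
with $C$ depending only on the Lipschitz constants of $f$ and on $T$.

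The second step decouples the ``particle'' error from the ``empirical/law'' error through the triangle inequality
$$\mathcal{W}_2^2(L_N[\mathbf{Y}_s],\mathbb{P}_{\bar{Y}^i_s}) \le 2\,\mathcal{W}_2^2(L_N[\mathbf{Y}_s],L_N[\bar{\mathbf{Y}}_s]) + 2\,\mathcal{W}_2^2(L_N[\bar{\mathbf{Y}}_s],\mathbb{P}_{\bar{Y}^i_s}) \le \frac{2}{N}\sum_{j=1}^N|\Delta Y^j_s|^2 + 2\,\mathcal{W}_2^2(L_N[\bar{\mathbf{Y}}_s],\mathbb{P}_{\bar{Y}^i_s}).$$
Taking expectations, using that the joint law of $(Y^i,\bar Y^i)_{i=1}^N$ is invariant under simultaneous permutations of the i.i.d.\ data $(\xi^i,\tilde\mu^i)$ (so that $\mathbb{E}[|\Delta Y^j_s|^2]=\mathbb{E}[|\Delta Y^i_s|^2]$ for every $j$), and imposing the smallness condition on $\gamma_1,\gamma_2$ in the spirit of Proposition~\ref{prop cvg rate} (so as to absorb the terminal $\gamma_1^2\mathbb{E}[|Y^i_\tau-\bar Y^i_\tau|^2]$ on the left), I arrive at
$$\mathbb{E}[|\Delta Y^i_t|^2] \le C'\int_t^T \mathbb{E}[|\Delta Y^i_s|^2]\,ds + C'\sup_{0\le s\le T}\mathbb{E}\bigl[\mathcal{W}_2^2(L_N[\bar{\mathbf{Y}}_s],\mathbb{P}_{\bar{Y}^i_s})\bigr].$$

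The third step invokes a quantitative empirical-measure rate. For each fixed $s$, the variables $\bar{Y}^1_s,\dots,\bar{Y}^N_s$ are i.i.d.\ draws from $\mathbb{P}_{\bar{Y}^i_s}$ and are essentially bounded (uniformly in $s$) thanks to Lemma~\ref{submartingle property} applied under the bounded terminal/obstacle conditions of (H3'), (H4'). The Fournier--Guillin rate in dimension one then yields $\mathbb{E}[\mathcal{W}_2^2(L_N[\bar{\mathbf{Y}}_s],\mathbb{P}_{\bar{Y}^i_s})] \le C N^{-1/2}$, uniformly in $s$. Applying the backward Gronwall inequality to the preceding display gives $\sup_{0\le t\le T}\mathbb{E}[|\Delta Y^i_t|^2]=\mathcal{O}(N^{-1/2})$, which is the claim.

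\textbf{Main obstacle.} The delicate step is controlling the terminal Wasserstein term inside the ess\,sup: to close the Gronwall loop one needs the $\gamma_1^2$ prefactor on $\mathbb{E}[|Y^i_\tau-\bar Y^i_\tau|^2]$ (and, after exchangeability, the $\gamma_2^2$ contribution from the averaged particle error) to admit absorption, which is why a smallness hypothesis on $(\gamma_1,\gamma_2)$ must be adopted. A secondary subtlety is that the Fournier--Guillin rate is pointwise in $s$; since the conclusion only requires $\sup_s\mathbb{E}[\cdot]$ (not $\mathbb{E}[\sup_s \cdot]$), this suffices, but a pathwise version would require combining the Hölder-in-time regularity of $\bar{Y}$ from Proposition~\ref{prop regularity of mean-field} with a time-discretisation and union-bound argument.
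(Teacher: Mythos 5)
Your overall strategy is the same as the paper's: reuse the $g$-expectation representation and the BSDE stability estimate exactly as in the proof of Proposition~\ref{prop cvg rate} to reduce everything to the empirical-measure error of the i.i.d.\ copies $\bar{Y}^i$, then quantify that error via Fournier--Guillin using the boundedness from Lemma~\ref{submartingle property}, close with backward Gronwall, and accept the smallness condition on $(\gamma_1,\gamma_2)$ needed to make $\lambda:=1-8(\gamma_1^2+\gamma_2^2)>0$. Up to constants, your Gronwall inequality is the paper's bound $\sup_t\mathbb{E}[|\Delta Y^i_t|^2]\le \frac{e^{K_2}}{\lambda}\mathbb{E}[\Gamma_{n,2}]$.

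There is, however, one genuine gap, and it is exactly the point you flag as a ``secondary subtlety'' and then dismiss. The obstacle term $\mathcal{W}\bigl(L_N[\bar{\mathbf{Y}}_\tau],\mathbb{P}_{\bar{Y}^i_\tau}\bigr)$ enters the stability estimate evaluated at \emph{stopping times} $\tau$ inside an essential supremum. After passing to expectations (via the Lemma D.1/Fatou argument used in Step 2 of Theorem~\ref{thm existence and uniqueness}), what survives is $\sup_{\tau}\mathbb{E}\bigl[\mathcal{W}^2(L_N[\bar{\mathbf{Y}}_\tau],\cdot)\bigr]$ over stopping times, which is \emph{not} dominated by $\sup_{s}\mathbb{E}\bigl[\mathcal{W}^2(L_N[\bar{\mathbf{Y}}_s],\cdot)\bigr]$ over deterministic times; the only available pathwise majorant is $\sup_{0\le s\le T}\mathcal{W}^2(L_N[\bar{\mathbf{Y}}_s],\cdot)$, so the constant one must control is $\mathbb{E}\bigl[\sup_{0\le s\le T}\mathcal{W}^2(L_N[\bar{\mathbf{Y}}_s],\mathbb{P}_{Y_s})\bigr]$ --- this is precisely the paper's $\mathbb{E}[\Gamma_{n,2}]$. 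The pointwise-in-$s$ Fournier--Guillin rate therefore does not suffice: one needs a \emph{uniform-in-time} rate for the expectation of the supremum, which is obtained by combining the one-dimensional Fournier--Guillin bound with the time-increment estimates (i) and (ii) of Proposition~\ref{prop regularity of mean-field} and a discretisation/chaining argument as in \cite{briand2021particles}. This is the entire reason Proposition~\ref{prop regularity of mean-field} is stated and proved, and why both \cite{fournier2015rate} and \cite{briand2021particles} are invoked together. Your proof becomes correct once you replace ``$\sup_s\mathbb{E}$ suffices'' by this uniform-in-time argument; as written, the step from the ess\,sup over stopping times to $\sup_s\mathbb{E}[\mathcal{W}_2^2]$ is unjustified.
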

\begin{proof}
From equation (\ref{eq EsupY leq law}) in the proof of Proposition \ref{prop cvg rate}, we can deduce that:
\begin{equation}
\sup _{0 \leq t \leq T} \mathbb{E}\left[\left|Y_t^{i}-\bar{Y}_t^i\right|^2\right] \leq \sup _{0 \leq t \leq T} \mathbb{E}\left[e^{2 \beta T}\left|Y_t^{i}-\bar{Y}_t^i\right|^2\right] \leq \frac{e^{K_2}}{\lambda} \mathbb{E}\left[\Gamma_{n, 2}\right],
\end{equation}
where $\lambda:=1-8\left(\gamma_1^2+\gamma_2^2\right)>0$, $K_2:=\frac{1}{\lambda} 2 \eta T C_f^2 $ and
$$
\Gamma_{n, 2}:=\left(4 \gamma_2^2+2 \eta  C_f^2\right) \sup _{0 \leq s \leq T} e^{2 \beta T} \mathcal{W}_1^2\left(L_n\left[\bar{\mathbf{Y}}_s\right], \mathbb{P}_{Y_s}\right).
$$
We recall the bound of the right hand side of the previous inequality from \cite{briand2021particles,fournier2015rate}. Based on the regularity in Proposition \ref{prop regularity of mean-field}, we have:
$$
\sup _{0 \leq t \leq T} E\left[\left|\Delta Y^i\right|^2\right] \leq \frac{C}{\sqrt{N}}
$$
with $C$ depending all parameters.
Hence, we deduce that $\sup _{0 \leq t \leq T} E\left[\left|\Delta Y^i\right|^2\right]=\mathcal{O}\left(N^{-1 / 2}\right)$.
\end{proof}

\bibliographystyle{plain}
\bibliography{RBSDEJ}

\end{document}